\documentclass[12pt,a4paper,reqno]{amsart}
\usepackage[mathcal]{eucal}
\usepackage{amssymb}
\usepackage[nice]{nicefrac}
\usepackage{hyperref}
\usepackage[normalem]{ulem}
\usepackage{color}

\addtolength{\hoffset}{-1cm}
\addtolength{\textwidth}{2cm}
\addtolength{\voffset}{-1cm}
\addtolength{\textheight}{2cm}

\theoremstyle{plain}
\newtheorem{theorem}{Theorem}[section]
\newtheorem{lemma}[theorem]{Lemma}
\newtheorem{proposition}[theorem]{Proposition}
\newtheorem{corollary}[theorem]{Corollary}

\theoremstyle{definition}

\newtheorem{remark}[theorem]{Remark}

\numberwithin{equation}{section}

\DeclareMathOperator{\hdim}{hdim}
\DeclareMathOperator{\hspec}{hspec}

\newcommand{\N}{{\ensuremath \mathbb{N}}}


\begin{document}

\title[Normal Hausdorff spectra]{Normal Hausdorff spectra of pro-$2$ groups}

\author[A. Thillaisundaram]{Anitha Thillaisundaram} 
\address{Anitha Thillaisundaram: School of Mathematics and Physics,
  University of Lincoln, Lincoln LN6 7TS, England}
\email{anitha.t@cantab.net}

\date{\today}





\begin{abstract} 
Klopsch and the author have constructed  a finitely generated pro-$p$
    group $G$, for $p$ an odd prime,  with infinite normal Hausdorff spectrum
  \[
  \hspec_\trianglelefteq^\mathcal{P}(G) = \{ \hdim_G^\mathcal{P}(H)
  \mid H \trianglelefteq_\mathrm{c} G \};
  \]
  here
  $\hdim_G^\mathcal{P} \colon \{ X \mid X \subseteq G \} \to [0,1]$
  denotes the Hausdorff dimension function associated to the $p$-power
  series $\mathcal{P} \colon G^{p^i}$, $i \in \mathbb{N}_0$. They show that
  $\hspec_\trianglelefteq^\mathcal{P}(G) = [0,\nicefrac{1}{3}] \cup
  \{1\}$
  contains an infinite interval, which answers a question of Shalev.

 They indicate in their paper how their results extend to the case $p=2$. In this note, we provide all the details for the even case.
 
\end{abstract}

\maketitle


\section{Introduction}

This paper supplements the corresponding paper \cite{KT} by Klopsch and the author for the odd prime case, and we refer the reader to \cite{KT} for notation and terminology used. In this paper, we are interested in a group constructed as follows.
The pro\nobreakdash-$2$ wreath product
$W = C_2 \mathrel{\hat{\wr}} \mathbb{Z}_2$ is the inverse limit
$\varprojlim_{n \in \N} C_2 \mathrel{\hat{\wr}}  C_{2^n}$ of the finite standard
wreath products of cyclic groups with respect to the natural
projections; clearly, $W$ is $2$-generated as a topological group.  Let
$F$ be the free pro-$2$ group on two generators and let
$R \trianglelefteq_\mathrm{c} F$ be the kernel of a presentation
$\pi \colon F \to W$.  We are interested in the pro-$2$ group
\[
G = F/N, \qquad \text{where} \quad N = [R,F] R^2
\trianglelefteq_\mathrm{c} F.
\]

\begin{theorem} \label{thm:main-thm} The normal Hausdorff
  spectra of the pro-$2$ group $G$ constructed above, with respect to
  the standard  filtrations series $\mathcal{P}$, $\mathcal{D}$, $\mathcal{F}$ and $\mathcal{L}$ respectively, 
  satisfy:  
  \begin{align*}
  \textup{hspec}^{\mathcal{P}}_{\trianglelefteq}(G) &= \textup{hspec}^{\mathcal{D}}_{\trianglelefteq}(G) = 
  \textup{hspec}^{\mathcal{F}}_{\trianglelefteq}(G)  = [0,1/3] \cup \{1\} \\
 \textup{hspec}^{\mathcal{L}}_{\trianglelefteq}(G) &= [0,1/5] \cup \{3/5\}\cup \{1\}
  \end{align*}
In particular, they contain an infinite real interval.
\end{theorem}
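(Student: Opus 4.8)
The plan is to work with the central extension
\[
1 \longrightarrow M \longrightarrow G \longrightarrow W \longrightarrow 1, \qquad M = R/N,
\]
in which $M$ is an \emph{infinite} elementary abelian pro-$2$ group contained in $Z(G)$: indeed $[R,F]\le N$ forces $M\le Z(G)$ and $R^2\le N$ forces exponent $2$. First I would identify $M$ as the mod-$2$ relation module of $W=C_2\mathrel{\hat\wr}\mathbb{Z}_2$. From the Fox/Lyndon resolution $0\to\bar R\to\mathbb{Z}_2[[W]]^2\to I_W\to 0$ attached to the $2$-generator presentation $\pi\colon F\to W$ one gets $M=\bar R/(I_W\bar R+2\bar R)$, and it is precisely the non-finite-presentability of the wreath product $W$ (its infinite family of relations $[a,a^{b^i}]$) that makes $\dim_{\mathbb{F}_2}M=\infty$ and keeps the Hausdorff story nontrivial. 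I would then split the closed normal subgroups $H\trianglelefteq_{\mathrm c}G$ into those with $H\le M$ (every closed subgroup of the central $M$ is normal) and those with $\bar H=HM/M\ne 1$. For each series $\mathcal S\in\{\mathcal P,\mathcal D,\mathcal F,\mathcal L\}$, each of which is canonical/verbal and hence projects onto the corresponding series of $W$, I would record the two growth profiles $m_i=\log_2|M:M\cap S_i(G)|$ and $w_i=\log_2|W:S_i(W)|$, so that $\log_2|G:S_i(G)|=m_i+w_i$.

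The central quantitative input is the asymptotic ratio $m_i:w_i$. I expect the three ``fast'' series $\mathcal P,\mathcal D,\mathcal F$ to give ratio $1:2$, whence $\hdim_G^{\mathcal S}(M)=\liminf_i m_i/(m_i+w_i)=1/3$, while the finer layering of the lower central series $\mathcal L$ gives ratio $1:4$ and $\hdim_G^{\mathcal L}(M)=1/5$. Because $M$ is central, any closed subgroup of $M$ is normal, and a layer-by-layer interpolation inside the $\mathbb{F}_2$-filtration $\{M\cap S_i(G)\}$ (choosing subspaces of controlled density in each quotient $M\cap S_i/M\cap S_{i+1}$ and taking the closure of their sum) realizes every value in the closed interval $[0,\hdim_G^{\mathcal S}(M)]$. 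This produces the intervals $[0,1/3]$ and $[0,1/5]$.

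For $H$ with $\bar H\ne 1$ the decisive structural fact is again that $M$ is a relation module: any lift to $G$ of a relation of $W$ holding among elements of $\bar H$ must lie in $N\cap M$, so $H\cap M$ is \emph{forced} to contain a submodule of $M$ (morally $\pi(I_{\bar H}\bar R)$) determined by $\bar H$. Writing $k_i=\log_2|(H\cap M):(H\cap M\cap S_i(G))|$ and $\bar h_i=\log_2|\bar H:\bar H\cap S_i(W)|$, one has $\hdim_G^{\mathcal S}(H)=\liminf_i (k_i+\bar h_i)/(m_i+w_i)$, and the forced lower bound on $k_i$ is exactly what creates the gap above the interval and isolates the remaining values. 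Concretely I would show: (i) if $\bar H$ surjects onto the $\mathbb{Z}_2$-quotient of $W$ the forced submodule exhausts $M$ relative to the filtration and $\hdim_G^{\mathcal S}(H)=1$; and (ii) for the base-group type $\bar H\le B$ one combines the forced part with $\hdim_W^{\mathcal S}(B)$. For $\mathcal P,\mathcal D,\mathcal F$ one has $\hdim_W^{\mathcal S}(B)=1$, giving $1$ again, so the spectrum is $[0,1/3]\cup\{1\}$; for $\mathcal L$ one has $\hdim_W^{\mathcal L}(B)=1/2$, and with the ratio $1:4$ this yields the isolated value $(\tfrac14+\tfrac12)/(\tfrac14+1)=3/5$, producing $[0,1/5]\cup\{3/5\}\cup\{1\}$.

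The main obstacle is the explicit $p=2$ computation of the profiles $m_i,w_i$ and of the forced-submodule sizes. Here the prime $2$ genuinely complicates the picture: squares and commutators are entangled, since $(xy)^2\equiv x^2y^2[y,x]\pmod{\gamma_3}$, so the $2$-power and commutator layers of each filtration intertwine and the identification of $M\cap S_i(G)$ cannot simply be transcribed from the odd-$p$ analysis of \cite{KT}. The associated graded computations for $G$, the determination of $\hdim_W^{\mathcal S}(B)$, and the normal subgroup lattice of $W$ must all be redone for $p=2$, with careful bookkeeping of how the $2$-power part and the lower central part of each series consume $M$ relative to $W$; it is precisely this bookkeeping that separates the behaviour of $\mathcal L$ from that of $\mathcal P,\mathcal D,\mathcal F$ and pins down the constants $1/3$, $1/5$ and $3/5$.
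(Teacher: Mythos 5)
Your outline follows the paper's strategy exactly: reduce via the central extension $1\to Z\to G\to W\to 1$ and the dichotomy of \cite[Cor.~4.2]{KT} to computing $\hdim_G^{\mathcal S}(Z)$ and $\hdim_G^{\mathcal S}(H)$ for each standard series, then fill $[0,\hdim_G^{\mathcal S}(Z)]$ with central subgroups; your asymptotic ratios $1{:}2$ versus $1{:}4$ and the resulting values $\nicefrac{1}{3}$, $\nicefrac{1}{5}$, $\nicefrac{3}{5}$, $1$ all agree with what the paper extracts from its explicit descriptions of $G_k^{\,2^k}$, $\gamma_i(G_k)$, $P_i(G_k)$, $D_i(G_k)$ and $\Phi_i(G_k)$. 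The only caveat is that what you defer as bookkeeping --- the $p=2$ entanglement of squares and commutators, handled in the paper via Lemma~\ref{lem:squared-commutators} (showing $[y,x,\overset{i}\ldots,x]^2\equiv[y,x,\overset{2i-1}\ldots,x,y]$ modulo $\gamma_{2i+2}(G_k)$) and the filtration computations of Sections~\ref{sec:p-power-series}--\ref{sec:other-series} --- is precisely the content of the paper, so your proposal is a correct plan with the right constants rather than a complete proof.
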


This also solves a problem posed by Shalev~\cite[Problem~16]{Sh00}.
As is done in \cite{KT}, we further compute the entire Hausdorff spectra of $G$ with respect to
the four standard filtration series.

\begin{theorem} \label{thm:entire-spectrum}  The Hausdorff
  spectra of the pro-$2$ group $G$ constructed above, with respect to
  the standard filtration series, satisfy:
  \begin{align*}
    \hspec^{\mathcal{P}}(G) & = \hspec^{\mathcal{D}}(G) =
                              \hspec^{\mathcal{F}}(G) = [0,1],\\
    \hspec^{\mathcal{L}}(G) & =  [0,\nicefrac{4}{5}) \cup
                              \{\nicefrac{3}{5} + \nicefrac{m}{5\cdot 2^{n-1}}\mid
                              m, n \in\mathbb{N}_0 \text{ with }
                              2^{n-1} < m\le 2^n\}.
  \end{align*}
\end{theorem}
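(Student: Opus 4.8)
The plan is to follow the strategy of \cite{KT} for the odd case, adapted to $p=2$. Throughout one works with the central extension $1 \to Z \to G \to W \to 1$, where $Z = R/N = R/[R,F]R^2$ is a central elementary abelian pro-$2$ group (an $\mathbb{F}_2$-vector space of infinite rank) and $G/Z \cong W = C_2 \mathrel{\hat{\wr}} \mathbb{Z}_2$. The first step, common to all four series, is to record for each filtration series $\mathcal{S} \in \{\mathcal{P}, \mathcal{D}, \mathcal{F}, \mathcal{L}\}$ an explicit formula for the layer counts $\log_2 |G : G_i^{\mathcal{S}}|$ and, for a closed subgroup $H \le_{\mathrm c} G$, for $\log_2 |H G_i^{\mathcal{S}} : G_i^{\mathcal{S}}|$ in terms of the image $\bar H \le W$ and the central part $H \cap Z$. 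This decomposition reduces the computation of $\hdim_G^{\mathcal{S}}(H) = \liminf_i \log_2|HG_i^{\mathcal{S}}:G_i^{\mathcal{S}}| \big/ \log_2|G:G_i^{\mathcal{S}}|$ to a $\liminf$ of ratios of combinatorially described layer counts, and is precisely the machinery of \cite{KT} carried over to the even case using the $\mathbb{F}_2[[t]]$-module description of $W$, with $t = \sigma - 1$ for a topological generator $\sigma$ of $\mathbb{Z}_2$.

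For the series $\mathcal{P}$, $\mathcal{D}$, $\mathcal{F}$ the containment $\hspec^{\mathcal{S}}(G) \subseteq [0,1]$ is automatic, so I would only need the reverse inclusion. By Theorem~\ref{thm:main-thm} the normal subgroups already realise $[0,\nicefrac{1}{3}] \cup \{1\}$, so it remains to realise every $\alpha \in (\nicefrac{1}{3}, 1)$ by some closed (necessarily non-normal) subgroup. I would do this by a diagonal construction, choosing generators distributed across the filtration layers at prescribed density, combining central directions in $Z$ with a suitable non-normal image in $W$. The key point making every $\alpha$ attainable is that these three series induce filtrations on $W$ and on $Z$ whose successive layers have uniformly bounded $\mathbb{F}_2$-dimension, equivalently $\log_2|G:G_i^{\mathcal{S}}|$ grows with bounded consecutive gaps; this boundedness lets one fine-tune the defining $\liminf$ to equal exactly $\alpha$. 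The coincidence of the three spectra then follows because the three series induce mutually commensurable filtrations on both $Z$ and $W$, so the relevant ratios share the same $\liminf$-behaviour.

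The substantive case is $\mathcal{L}$. Here I would first compute the growth of $\log_2 |G : L_i(G)|$ exactly: because the successive layers of the $\mathcal{L}$-series of $W$ are governed by the powers $t^k \mathbb{F}_2[[t]]$ of the augmentation ideal, together with the $2$-power jumps coming from the $\mathbb{Z}_2$-action, this count grows essentially quadratically in $i$ with dyadic corrections. One then classifies the achievable values of the $\liminf$. The bulk directions, namely subgroups whose density is spread over the many small layers, yield the whole interval $[0,\nicefrac{4}{5})$, whereas subgroups of small corank force the ratio $\log_2|HL_i:L_i| \big/ \log_2|G:L_i|$ into a staircase whose $\liminf$ locks onto the dyadic values $\nicefrac{3}{5} + \nicefrac{m}{5 \cdot 2^{n-1}}$ with $2^{n-1} < m \le 2^n$, matching exactly the jump pattern of the layer sizes.

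The main obstacle is precisely this last classification for $\mathcal{L}$. Showing that each listed dyadic value is attained amounts to exhibiting the right near-cofinite subgroup, but proving that nothing else in $(\nicefrac{4}{5}, 1]$ occurs, and in particular that $\nicefrac{4}{5}$ itself is \emph{not} attained but is only an accumulation point of the exceptional set from above, requires sharp two-sided estimates on $\log_2|HL_i:L_i|$ as $H$ ranges over subgroups of bounded corank, together with careful bookkeeping of the staircase of layer sizes dictated by the valuations in $\mathbb{F}_2[[t]]$. This delicate interplay between the quadratic layer growth and the dyadic jumps is where I expect the real work to lie.
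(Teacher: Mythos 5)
Your outline has the right overall shape---work in the central extension $1\to Z\to G\to W\to 1$, handle $\mathcal{P},\mathcal{D},\mathcal{F}$ by realising $(\nicefrac{1}{3},1)$ on top of Theorem~\ref{thm:main-thm}, and isolate $\mathcal{L}$ as the substantive case with spectrum $[0,\nicefrac{4}{5})$ plus a dyadic exceptional set accumulating at $\nicefrac{4}{5}$ from above---and this is indeed how the paper proceeds, quoting \cite{KT} for everything except the new $\mathcal{L}$-computation. But two of your supporting claims are wrong. First, $\log_2\lvert G:P_i(G)\rvert$ grows \emph{linearly}, like $\nicefrac{5i}{2}$, not quadratically; this linear rate is precisely where the denominator $5$ in the answer comes from, via $\hdim^{\mathcal L}_G(Z)=\nicefrac{1}{5}$ and $\hdim^{\mathcal L}_G(H)=\nicefrac{3}{5}$ in \eqref{equ:xi-for-L}, so the error suggests the layer computation of Corollary~\ref{cor:lower-p-central-Gk} has not actually been carried out. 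Second, the three series $\mathcal{P},\mathcal{D},\mathcal{F}$ are \emph{not} mutually commensurable: $\log_2\lvert G:D_i(G)\rvert$ grows linearly in $i$ while $\log_2\lvert G:G^{2^i}\rvert$ and $\log_2\lvert G:\Phi_i(G)\rvert$ grow exponentially. The equality of the three spectra is not a structural consequence of commensurability; each spectrum is computed to be $[0,1]$ separately.

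The essential gap is the $\mathcal{L}$-case itself, which you explicitly defer. The paper's proof hinges on producing, for each target value, a closed subgroup $K$ generated by the conjugates $y_j$ with $j$ running over $m$ prescribed residues modulo $2^n$, and proving the exact identity $\hdim_Z^{\mathcal{L}\vert_Z}(K\cap Z)=(2m-1)/2^{n}$ of \eqref{equ:the-key}. The upper bound comes from computing $K\varrho_k\cap Z\varrho_k\cong C_2^{\,(2m-1)2^{k-n-1}+1}$ inside $G_k$ along the subsequence $i=2^k+2$; the matching lower bound \eqref{equ:last-reduction} requires tracking which basic commutators $c_j=[y,x,\overset{j-1}{\ldots},x]$ are congruent to elements of $K$ modulo $P_{j+1}(G)Z$, using the identity $(1+t)^{j+2^n}=(1+t)^j(1+t^{2^n})$ in $\mathbb{F}_2[\![t]\!]$ and the count of the resulting index set $D=D_0+2^n\mathbb{N}_0$ with $D_0=\{1,\dots,2m-1\}$. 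None of this, nor the reverse inclusion $\hspec^{\mathcal L}(G)\subseteq(\nicefrac{3}{5},\nicefrac{4}{5})\cup S$ that rules out every other value in $[\nicefrac{4}{5},1]$, is supplied by your proposal; ``sharp two-sided estimates on subgroups of bounded corank'' names the difficulty without resolving it.
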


This paper is organised as follows. Section 2 gives an explicit 
presentation of the pro-$2$ group $G$ and describes its finite quotients $G_k$ for $k\in \mathbb{N}$.  In
Section~\ref{sec:p-power-series} we compute the normal Hausdorff
spectrum of $G$ with respect to the $2$-power series~$\mathcal{P}$,
and in Section~\ref{sec:other-series} we compute the normal Hausdorff
spectra of $G$ with respect to the other three standard filtration
series~$\mathcal{D}, \mathcal{F}, \mathcal{L}$.  Finally, in Section~\ref{sec:entire-spectrum} we compute the entire Hausdorff spectra of $G$.


\section{An explicit presentation for the pro-$2$ group~$G$ and \\ a
  description of its finite quotients $G_k$ for $k \in
  \mathbb{N}$} \label{sec:presentation}

 As indicated in the paragraph
before Theorem~\ref{thm:main-thm}, we consider the pro-$2$ group
$G = F/N$, where 
\begin{itemize}
\item $F = \langle x,y \rangle$ is a free pro-$2$ group and
\item $N = [R,F] R^2 \trianglelefteq_\mathrm{c} F$ for the kernel
  $R \trianglelefteq_\mathrm{c} F$ of the presentation
  $\pi \colon F \to W$ sending $x,y$ to the generators of the same
  name in $W$.
\end{itemize}
As in \cite{KT}, we now obtain explicit presentations for the pro-$2$ groups $W$
and~$G$.

We write $y_i = y^{x^i}$ for $i \in \mathbb{Z}$.
Setting
\begin{equation}\label{equ:R_k}
  R_k =\langle \{ x^{2^k}, y^2 \} \cup \{ [y_0,y_i] \mid 1\le i \le
  2^{k-1} \} \rangle ^F \trianglelefteq_\mathrm{o} F
\end{equation}
for $k\in \mathbb{N}$, we obtain a descending chain of open normal
subgroups
\begin{equation}\label{equ:R}
  F \supseteq R_1\supseteq R_2 \supseteq \ldots
\end{equation}
with quotient groups
$F/R_k \cong W_k \cong C_2 \mathrel{\wr} C_{2^k}$.  Writing
\begin{equation*}
  R=\bigcap\nolimits_{k \in \mathbb{N}} R_k = \langle  \{ y^2 \} \cup
  \{ [y_0,y_i] \mid i\in \mathbb{N} \} \rangle ^F \trianglelefteq_\mathrm{c} F, 
\end{equation*}
we obtain $F/R \cong W \cong C_2 \mathrel{\hat{\wr}} \mathbb{Z}_2$.  

Setting $N_k = [R_k,F]R_k^{\, 2}$ for $k \in \mathbb{N}$, we observe
that
\begin{multline*}
  N_k = \langle \{ x^{2^{k+1}} , y^{4}, [x^{2^k},y], [y^2,x] \} \cup
  \{ [y_0,y_i]^2 \mid 1\le i \le 2^{k-1} \}  \\
  \cup \{ [y_0,y_i,x] \mid 1\le i \le 2^{k-1}\} \cup \{
  [y_0,y_i,y] \mid 1\le i \le 2^{k-1}\} \rangle ^F
  \trianglelefteq_\mathrm{o} F,
\end{multline*}
and as in \eqref{equ:R} we obtain a descending chain
$F \supseteq N_1 \supseteq N_2 \supseteq \ldots$ of open normal
subgroups.  As in \cite{KT}, we conclude that
\begin{equation*}
  \bigcap\nolimits_{k \in \mathbb{N}} N_k = [R,F]R^2 = N.
\end{equation*}

Consequently, $G = F/N \cong \varprojlim G_k$, where
\begin{equation}\label{eq:pres-for-G-k}
  \begin{split}
    G_k = F/N_k
    & \cong  \langle x,y \mid \uline{\phantom{[}x^{2^{k+1}}},\;
    y^{4},\; \uline{~[x^{2^k},y]},\; [y^2,x];\\
    & \qquad \quad [y_0,y_i]^2,\; [y_0,y_i,x],\; [y_0,y_i,y] \quad
    \text{for $1\le i\le 2^{k-1}$} \rangle
  \end{split}
\end{equation}
for $k \in \mathbb{N}$, and 
\begin{equation}\label{equ:pres-for-G}
  G \cong \langle x,y \mid y^{4},\; [y^2,x];\; [y_0,y_i]^2,\;
  [y_0,y_i,x],\; [y_0,y_i,y] \quad \text{for $i \in \mathbb{N}$} \rangle
\end{equation}
is a presentation of $G$ as a pro-$2$ group.  To facilitate later use,
we have underlined the two relations in \eqref{eq:pres-for-G-k} that
do not yet occur in~\eqref{equ:pres-for-G}.

\medskip

We note  that all results in \cite[\S 2 and \S 4]{KT} and \cite[Lem.~3.1]{KT} hold for $p=2$ with corresponding proofs. 


\section{The normal Hausdorff spectrum of $G$ with respect to the $2$-power series}\label{sec:p-power-series}

For convenience we recall two standard commutator
collection formulae.

\begin{proposition}\label{pro:standard-commutator-id}
  For a prime $p$, let $G = \langle a,b \rangle$ be a finite $p$-group, and let
  $r \in \N$.  For $u,v \in G$ let $K(u,v)$ denote the normal closure
  in $G$ of \textup{(i)} all commutators in $\{u,v\}$ of weight at
  least $p^r$ that have weight at least $2$ in $v$, together with
  \textup{(ii)} the $p^{r-s+1}$th powers of all commutators in
  $\{u,v\}$ of weight less than $p^s$ and of weight at least $2$
  in~$v$ for $1 \le s \leq r$.  Then
  \begin{align}
    (ab)^{p^r} & \equiv_{K(a,b)} a^{p^r} \, b^{p^r} \, [b, a]^{\binom{p^r}{2}} \,
             [b, a, a]^{\binom{p^r}{3}} \, \cdots \,
             [b, a, \overset{p^r-2} \ldots, a]^{\binom{p^r}{p^r-1}} \,
             [b, a, \overset{p^r-1} \ldots,
             a], \label{equ:commutator-formula-1} \\ 
    [a^{p^r}, b] & \equiv_{K(a,[a,b])} [a, b]^{p^r} \, [a,b,a]^{\binom{p^r}{2}} \,
               \cdots \, [a, b, a, \overset{p^r-2} \ldots, a]^{\binom{p^r}{p^r-1}}
               \, [a, b, a, \overset{p^r-1} \ldots, a]. \label{equ:commutator-formula-2} 
  \end{align}
\end{proposition}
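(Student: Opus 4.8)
The plan is to obtain both identities from P.~Hall's collection process, deducing \eqref{equ:commutator-formula-2} formally from \eqref{equ:commutator-formula-1}. Since collection is purely combinatorial and does not see the parity of $p$, the argument is identical to the one for odd $p$ recorded in \cite{KT}; I only indicate its shape.

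The reduction of \eqref{equ:commutator-formula-2} to \eqref{equ:commutator-formula-1} comes first. Setting $c=[a,b]$ and using $a^{b}=ac$, one has
\[
[a^{p^r},b]=a^{-p^r}(a^{b})^{p^r}=a^{-p^r}(ac)^{p^r}.
\]
Applying \eqref{equ:commutator-formula-1} to the pair $(a,c)$, with $c$ in the role of $b$, the outer factors $a^{-p^r}a^{p^r}$ cancel and each surviving factor $[c,a,\overset{k}{\ldots},a]^{\binom{p^r}{k+1}}$ becomes $[a,b,a,\overset{k}{\ldots},a]^{\binom{p^r}{k+1}}$; moreover $K(a,c)=K(a,[a,b])$ by definition. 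This is precisely \eqref{equ:commutator-formula-2}, so it remains to prove \eqref{equ:commutator-formula-1}.

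For \eqref{equ:commutator-formula-1} I would work in the free group on $\{a,b\}$, where Hall's collection process applied to $(ab)^{p^r}$ yields an identity
\[
(ab)^{p^r}=a^{p^r}\,b^{p^r}\prod_{i}c_i^{\,f_i(p^r)},
\]
valid in every group, where the $c_i$ are the basic commutators in $a,b$ of weight at least $2$, listed in increasing weight, and each $f_i$ is an integer-valued polynomial with $f_i(0)=0$ of degree at most the weight of $c_i$. The commutators that are linear in $b$ are exactly the left-normed $[b,a,\overset{k}{\ldots},a]$ of weight $k+1$, and counting the collected positions shows each occurs with exponent $\binom{p^r}{k+1}$; these account for all the displayed factors. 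Everything else is to be absorbed into $K(a,b)$.

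The main obstacle is this last absorption, for which the definition of $K(a,b)$ is exactly calibrated. Let $c_i$ be a basic commutator of weight $w$ that is of weight at least $2$ in $b$. If $w\ge p^r$ then $c_i\in K(a,b)$ by clause~(i), so assume $w<p^r$. Since $f_i(0)=0$ and $\deg f_i\le w$, the exponent $f_i(p^r)$ is a $\mathbb{Z}$-linear combination of $\binom{p^r}{1},\dots,\binom{p^r}{w}$. From $\binom{p^r}{j}=\tfrac{p^r}{j}\binom{p^r-1}{j-1}$ and the fact that $\binom{p^r-1}{j-1}$ is a $p$-adic unit, one gets $v_p\!\big(\binom{p^r}{j}\big)=r-v_p(j)$. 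If $w<p^s$ with $1\le s\le r$, then every index $j\le w$ satisfies $v_p(j)\le s-1$, whence $p^{\,r-s+1}\mid\binom{p^r}{j}$ and therefore $p^{\,r-s+1}\mid f_i(p^r)$. Thus $c_i^{\,f_i(p^r)}$ is a power of the $p^{\,r-s+1}$th power of $c_i$ and lies in $K(a,b)$ by clause~(ii). Multiplying the surviving linear-in-$b$ factors and discarding the rest modulo $K(a,b)$ gives \eqref{equ:commutator-formula-1}, with $p=2$ requiring no special treatment.
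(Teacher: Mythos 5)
Your proposal is correct and matches the paper's treatment: the paper gives exactly your reduction of \eqref{equ:commutator-formula-2} to \eqref{equ:commutator-formula-1} via $[a^{p^r},b]=a^{-p^r}(a[a,b])^{p^r}$, and for \eqref{equ:commutator-formula-1} it simply cites \cite[Prop.~1.1.32]{LeMc02}, whose underlying argument is the Hall collection and $p$-adic valuation computation you sketch. Your explicit absorption step (writing $f_i(p^r)$ as a $\mathbb{Z}$-combination of $\binom{p^r}{j}$ and using $v_p\bigl(\binom{p^r}{j}\bigr)=r-v_p(j)$) is sound and correctly calibrated to the definition of $K(u,v)$.
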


 This result is recorded (in a slighter stronger form) in
\cite[Prop.~1.1.32]{LeMc02}; we remark that
\eqref{equ:commutator-formula-2} follows  from
\eqref{equ:commutator-formula-1}, due to the identity
$[a^{p^r},b] = a^{-p^r} (a [a,b])^{p^r}$.

\medskip

Now we continue to use the notation set up in
Section~\ref{sec:presentation} and establish that
$\xi = \hdim^{\mathcal{P}}_G(Z) = \nicefrac{1}{3}$ and
$\eta = \hdim^{\mathcal{P}}_G(H) = 1$, with respect to the $2$-power
series~$\mathcal{P}$.  In view of
\cite[Cor. 4.2]{KT} this proves
Theorem~\ref{thm:main-thm} for the $2$-power series.  Indeed,
$\hdim^{\mathcal{P}}_G(H) = 1$ is already a consequence of
\cite[Prop.~4.2]{KlThZuXX}.  It remains to show that
\begin{equation} \label{equ:xi-1/3-P} \hdim^{\mathcal{P}}_G(Z) =
  \varliminf_{i\to \infty} \frac{\log_2 \lvert ZG^{2^i} : G^{2^i}
    \rvert}{\log_2 \lvert G : G^{2^i} \rvert} = \nicefrac{1}{3}.
\end{equation}

As in \cite{KT} we work with the finite quotients $G_k$,
$k \in \mathbb{N}$, introduced in Section~\ref{sec:presentation}.  Let
$k \in \mathbb{N}$.  From \eqref{eq:pres-for-G-k} and
\eqref{equ:pres-for-G} we observe that
\[ 
\lvert G : G^{2^k} \rvert = \lvert G_k : G_k^{\, 2^k} \rvert.
\]
First we compute the order of~$G_k$, using the notation from
Section~\ref{sec:presentation}. 

\begin{lemma}\label{lem:order-Gk}
  The logarithmic order of $G_k$ is
  \[
  \log_2 \lvert G_k \rvert = 2^k+2^{k-1}+k+2.
  \]
  In particular,  
 \begin{equation*}
   Z_k = R_k/N_k = \langle \{ x^{2^k}, y^2 \} \cup \{ [y_0,y_i] \mid 1 \le i \le
    2^{k-1} \} \rangle N_k / N_k \cong C_2 \times
   \overset{2^{k-1}+2}{\ldots} \times C_2. 
 \end{equation*}
\end{lemma}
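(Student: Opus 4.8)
The plan is to compute $|G_k|$ by analysing the short exact sequence
\[
1 \longrightarrow Z_k \longrightarrow G_k \longrightarrow W_k \longrightarrow 1,
\]
where $W_k = F/R_k \cong C_2 \wr C_{2^k}$ has $\log_2|W_k| = 2^k + k$ and $Z_k = R_k/N_k = \ker(G_k \to W_k)$. Since $N_k = [R_k,F]R_k^{\,2}$ contains both $[R_k,F]$ and $R_k^{\,2}$, the subgroup $Z_k$ is central in $G_k$ and elementary abelian, so it suffices to prove $\log_2|Z_k| = 2^{k-1}+2$; the displayed description of $Z_k$ then follows at once, and the formula for $\log_2|G_k|$ is obtained by adding $\log_2|W_k| = 2^k+k$ and $\log_2|Z_k| = 2^{k-1}+2$.

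For the upper bound, note that $R_k$ is the normal closure in $F$ of the $2^{k-1}+2$ elements $x^{2^k}$, $y^2$ and $[y_0,y_i]$ with $1\le i\le 2^{k-1}$. Modulo $[R_k,F]$ each $F$-conjugate of such a generator is congruent to the generator itself, and modulo $R_k^{\,2}$ the quotient is elementary abelian; hence $Z_k$ is spanned over $\mathbb{F}_2$ by the images of these $2^{k-1}+2$ elements, so that $\log_2|Z_k|\le 2^{k-1}+2$. That the list of commutators stops at $i=2^{k-1}$ reflects the congruences $[y_a,y_b]\equiv[y_0,y_{b-a}]$ and $[y_0,y_j]\equiv[y_0,y_{2^k-j}]$ holding in $Z_k$ (indices modulo $2^k$), which collapse the a priori $2^k-1$ commutators to these $2^{k-1}$; the same congruences warn that one must still rule out any further collapse.

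The crux is the matching lower bound, namely that these $2^{k-1}+2$ central generators are $\mathbb{F}_2$-independent. I would establish this by constructing an explicit central extension $P$ of $W_k$ by $C_2^{\,2^{k-1}+2}$, with basis vectors $u,v,w_1,\dots,w_{2^{k-1}}$ of the centre, in which lifts $\hat x,\hat y$ of the generators of $W_k$ satisfy $\hat x^{2^k}=u$, $\hat y^2=v$ and $[\hat y_0,\hat y_i]=w_i$, while all remaining defining relators of $G_k$ hold. Since $P$ then satisfies the presentation \eqref{eq:pres-for-G-k}, there is a surjection $G_k\twoheadrightarrow P$, forcing $\log_2|Z_k|\ge 2^{k-1}+2$ and hence equality. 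Building $P$, exactly as in the odd case of \cite{KT}, amounts to checking the consistency (cocycle) conditions using the collection formulae of Proposition~\ref{pro:standard-commutator-id}.

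The main obstacle is precisely this consistency check at the prime $2$. Here \eqref{equ:commutator-formula-1} specialises to $(ab)^2\equiv a^2b^2\,[b,a]$, so the leading commutator carries the odd coefficient $\binom{2}{2}=1$; squaring therefore feeds commutators back into the computation, unlike the odd-$p$ case where $\binom{p}{2}$ is divisible by $p$. One must verify that the relations $y^4$, $[y^2,x]$, $[y_0,y_i]^2$ and $x^{2^{k+1}}$ absorb these extra terms without collapsing any of the classes $w_i$, $u$ or $v$. Once this bookkeeping is carried out, as the $p=2$ analogue of the argument in \cite{KT}, the independence of the central generators follows and the lemma is proved.
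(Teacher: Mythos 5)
Your skeleton is the right one, and the easy half is complete: $Z_k=R_k/N_k$ is central and elementary abelian in $G_k=F/N_k$ because $[R_k,F]R_k^{\,2}=N_k$, it is generated by the images of the $2^{k-1}+2$ normal generators of $R_k$ since conjugation acts trivially modulo $[R_k,F]$, and the lemma reduces to $\dim_{\mathbb{F}_2}Z_k=2^{k-1}+2$ combined with $\log_2\lvert W_k\rvert=2^k+k$. This is exactly the frame of the paper's proof, which is a one-line deferral to the $p=2$ transcription of \cite[Lem.~5.1]{KT}.

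The difficulty is that the lower bound is the entire content of the lemma, and your proposal does not actually establish it. You posit a central extension $P$ of $W_k$ by $C_2^{\,2^{k-1}+2}$ in which the classes of $x^{2^k}$, $y^2$ and the $[y_0,y_i]$ are independent, and then state that building $P$ ``amounts to checking the consistency conditions'' and that the lemma follows ``once this bookkeeping is carried out.'' That bookkeeping \emph{is} the theorem: the existence of such a $P$ is equivalent to the assertion that these $2^{k-1}+2$ relators represent linearly independent classes in $H^2(W_k,\mathbb{F}_2)$, and you correctly observe that at $p=2$ the collection formula $(ab)^2=a^2b^2[b,a]$ makes the verification genuinely delicate (this is where one must also rule out the ``further collapse'' you warn about, e.g.\ that $[y_0,y_{2^{k-1}}]$ does not die). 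As written the argument is therefore a plan rather than a proof. Two ways to close it: (i) actually exhibit $P$ and perform the consistency check; or (ii) bypass cocycle bookkeeping entirely by noting that, since $F$ is free pro-$2$ of rank $2=d(W_k)$ and $W_k$ is finite, one has $\dim_{\mathbb{F}_2}\bigl(R_k/[R_k,F]R_k^{\,2}\bigr)=\dim_{\mathbb{F}_2}H^2(W_k,\mathbb{F}_2)$, the mod-$2$ relation rank of $W_k$, which can be evaluated as $\dim_{\mathbb{F}_2}\bigl(M(W_k)\otimes\mathbb{F}_2\bigr)+\dim_{\mathbb{F}_2}\operatorname{Tor}(W_k^{\mathrm{ab}},\mathbb{F}_2)$ from the known Schur multiplier of $C_2\wr C_{2^k}$ and $W_k^{\mathrm{ab}}\cong C_{2^k}\times C_2$; this yields the exact count $2^{k-1}+2$ and simultaneously certifies that no further collapse among your generators is possible. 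Either way, the independence of the central generators must be demonstrated, not reduced to ``the $p=2$ analogue of \cite{KT}.''
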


\begin{proof}
  This is essentially the same proof as for \cite[Lem. 5.1]{KT}.
\end{proof}

Our next aim is to prove the following structural result.

\begin{proposition} \label{pro:index-Gk-Gk-hoch-p} In the set-up from
  Section~\ref{sec:presentation}, for $k\ge 2$, the subgroup
  $G_k^{\, 2^k} \le G_k$ is elementary abelian;
  it is generated independently by $x^{2^k}$,
  $w = y_{2^k-1} \cdots y_1y_0$ and
  $[w,x]=[y_0,y_{2^{k-1}}]$.

  Consequently
  \[
  G_k^{\, 2^k} \cong C_2 \times C_2 \times C_2, \qquad \log_2 \vert
  G_k : G_k^{\, 2^k} \rvert = \log_2 \lvert G_k \rvert - 3
  \]
  and
  \begin{equation*}
    \begin{split}
      G_k / G_k^{\, 2^k} & \cong \langle x,y \mid x^{2^{k}},\;
      y^{4},\; 
      [y^2,x],\; w(x,y),\; [y_0,y_{2^{k-1}}];\\
      & \qquad \quad [y_0,y_i]^2,\; [y_0,y_i,x],\; [y_0,y_i,y] \quad
      \textup{for } 1\le i\le 2^{k-1} \rangle.
    \end{split}
  \end{equation*}
\end{proposition}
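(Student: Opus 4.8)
The plan is to compute $G_k^{\,2^k}$ explicitly using the presentation \eqref{eq:pres-for-G-k} together with the commutator collection formulae in Proposition~\ref{pro:standard-commutator-id}. The group $G_k$ is metabelian of a very controlled type: modulo the underlined relations it agrees with $G/G^{2^k}$, and the commutator structure is governed by the abelian ``base'' generated by the $y_i$'s, with $x$ acting as a shift. I would first note that since $G_k$ is a finite $2$-group, every $2^k$th power can be collected via \eqref{equ:commutator-formula-1}, and the key point is to understand the image of $G_k^{\,2^k}$ in the various sections.

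First I would identify three natural elements of $G_k^{\,2^k}$: the power $x^{2^k}$ (which is central of order $2$ by Lemma~\ref{lem:order-Gk}, since $x^{2^{k+1}}=1$ and $[x^{2^k},y]=1$ are relations); the element $w = y_{2^k-1}\cdots y_1 y_0$, which should arise as $(xy)^{2^k}$ or a closely related collected power; and the commutator $[w,x]$, which I would show equals $[y_0,y_{2^{k-1}}]$ by using that $[y_0,y_i]$ are central of order $2$ and that the shift action telescopes the product $[w,x]=\prod_i [y_i,y_{i+1}]$ down to a single surviving term. Concretely, I would apply \eqref{equ:commutator-formula-1} to $(xy)^{2^k}$: the leading term $x^{2^k}$ appears, the collected $y$-part gives $w$, and the correction terms are binomial powers of commutators $[y,x,\ldots]$; the relations $[y^2,x]$, $[y_0,y_i]^2$, $[y_0,y_i,x]$, $[y_0,y_i,y]$, and $[x^{2^k},y]$ are exactly what is needed to kill or collapse these correction terms, leaving only the claimed generators. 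The binomial coefficients $\binom{2^k}{j}$ modulo $2$ (Lucas/Kummer) control which correction terms survive, and because the base is elementary abelian of exponent dividing $4$ these reduce cleanly.

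Next, to see that $G_k^{\,2^k}$ is elementary abelian and that the three generators are independent, I would verify that each of $x^{2^k}$, $w$, $[w,x]$ has order $2$ and that they commute: $x^{2^k}$ is central; $w^2 = \prod y_i^2 \cdot(\text{commutators})$ collapses to the identity because $y^2\in R_k$ maps into $N_k$-level relations and all pairwise $[y_i,y_j]^2$ are trivial; and $[w,x]=[y_0,y_{2^{k-1}}]$ has order $2$ by the relation $[y_0,y_i]^2$. Independence then follows by locating each generator in a distinct quotient of the lower central / power series of $G_k$, or equivalently by a dimension count against Lemma~\ref{lem:order-Gk}. Once $\lvert G_k^{\,2^k}\rvert = 8$ is established, the index formula $\log_2\lvert G_k:G_k^{\,2^k}\rvert = \log_2\lvert G_k\rvert - 3$ is immediate, and the presentation for $G_k/G_k^{\,2^k}$ is obtained by adjoining the relators $x^{2^k}$, $w(x,y)$ and $[y_0,y_{2^{k-1}}]$ to \eqref{eq:pres-for-G-k}, which is exactly the statement.

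The main obstacle I anticipate is the careful bookkeeping in the collection formula for $(xy)^{2^k}$: one must track precisely which correction commutators of weight $\ge 2$ in $y$ survive modulo the defining relations and modulo $N_k$, and confirm that the surviving data is exactly $\{x^{2^k}, w, [y_0,y_{2^{k-1}}]\}$ with no extra relations among them. In particular, showing $[w,x]=[y_0,y_{2^{k-1}}]$ (rather than a longer product of commutators) requires the telescoping identity for the shift action $y_i^x = y_{i+1}$ combined with the relation $x^{2^k}$ pushing indices around the ``circle'' of length $2^k$, and care is needed at the wrap-around index $2^{k-1}$ where the single commutator survives. This is the heart of the argument; the remaining order and independence checks are then routine given Lemma~\ref{lem:order-Gk}.
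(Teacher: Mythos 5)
Your proposal correctly handles one half of the proposition --- the half corresponding to the paper's Lemma~\ref{lem:w-elements}: identifying $x^{2^k}$, $w=x^{-2^k}(xy)^{2^k}$ and $[w,x]=[y_0,y_{2^{k-1}}]$ as elements of $G_k^{\,2^k}$, checking they have order~$2$, and verifying the telescoping identity for $[w,x]$. But there is a genuine gap in the other half: you never establish the reverse inclusion $G_k^{\,2^k}\subseteq\langle x^{2^k},w,[w,x]\rangle$. The subgroup $G_k^{\,2^k}$ is generated by $g^{2^k}$ for \emph{all} $g\in G_k$, and your collection argument is only carried out for $g=x$ and $g=xy$. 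Exhibiting three independent involutions inside $G_k^{\,2^k}$ gives $\lvert G_k^{\,2^k}\rvert\ge 8$, and a ``dimension count against Lemma~\ref{lem:order-Gk}'' cannot supply the matching upper bound, since that lemma only records $\lvert G_k\rvert$ and the structure of $Z_k$. Without the upper bound you cannot conclude $\lvert G_k^{\,2^k}\rvert=8$, nor that $G_k^{\,2^k}$ is elementary abelian (a priori it could meet $\gamma_2(G_k)$, which has exponent $4$, in elements of order $4$), nor that the stated presentation of $G_k/G_k^{\,2^k}$ is obtained by adjoining just those three relators.

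The missing ingredient is precisely the content of the paper's Lemmata~\ref{lem:squared-commutators} and~\ref{lem:contained}. A general element is $x^mh$ with $h\in H_k$, and repeated application of $(ab)^2=a^2b^2[b,a]$ gives
\[
(x^mh)^{2^j}=x^{2^jm}\,[h,x^m,(x^m)^2,\ldots,(x^m)^{2^{j-2}}]^2\,[h,x^m,(x^m)^2,\ldots,(x^m)^{2^{j-1}}],
\]
and one must show the squared correction term lands deep enough in the lower central series; this is the depth-doubling statement $[y,x,\overset{i}{\ldots},x]^2\in[H_k,H_k]\cap\gamma_{2i+1}(G_k)$, which is a nontrivial fact specific to $p=2$ and is proved by its own induction. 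Note also that these squared terms are \emph{not} killed by the defining relations, contrary to what your second paragraph suggests: the term $[y,x,x^2,\ldots,x^{2^{k-2}}]^2$ survives and genuinely contributes to $w$. Until you supply an argument of this kind for arbitrary $x^mh$, the proposition is not proved.
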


The proof requires a series of lemmata.

\begin{lemma} \label{lem:w-elements} The elements
  \[
    w = y_{2^k-1}\cdots y_1y_0 \qquad \text{and} \qquad
    [w,x]=[w,y]=[y_0,y_{2^{k-1}}]
  \]
  are of order $2$ in $G_k$ and lie in $G_k^{\, 2^k}$.
    In particular the subgroup $\langle x^{2^k},w,[w,x]\rangle$ is isomorphic to $C_2\times C_2\times C_2$ and lies in
     $G_k^{\, 2^k}$.
\end{lemma}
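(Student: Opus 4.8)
The plan is to extract, from the presentation \eqref{equ:pres-for-G}--\eqref{eq:pres-for-G-k}, the class-$2$ structure of the subgroup generated by the conjugates $y_i = y^{x^i}$, and then to reduce everything to three short commutator/square computations. First I would record the centrality facts: $y^2$ is central of order dividing $2$ (from $[y^2,x]$ and $y^4$), so every $y_i^{\,2} = (y^2)^{x^i}$ equals the central element $z = y^2$; and $x^{2^k}$ is central in $G_k$ (from the underlined relations $x^{2^{k+1}}$ and $[x^{2^k},y]$ in \eqref{eq:pres-for-G-k}), so that the indices of the $y_i$ are periodic modulo $2^k$. Writing $c_m = [y_0,y_m]$, the relations $[y_0,y_i]^2$, $[y_0,y_i,x]$, $[y_0,y_i,y]$ make each $c_m$ central of order dividing $2$; combined with the shift $y_i\mapsto y_{i+1}$ and the periodicity they yield $[y_i,y_j]=c_{j-i}$ and the symmetry $c_m = c_{2^k-m}$. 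Thus $\langle y_0,\dots,y_{2^k-1}\rangle$ has nilpotency class at most $2$ with central commutator subgroup spanned by the $c_m$.

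The membership assertions are then almost free. I would use the exact identity $(xy)^n = x^n\, y_{n-1}\cdots y_1 y_0$, proved by a one-line induction in the free group, which at $n=2^k$ reads $(xy)^{2^k} = x^{2^k} w$. Hence $w = x^{-2^k}(xy)^{2^k}$ is a product of two $2^k$-th powers and so lies in the subgroup $G_k^{\,2^k}$ generated by all $2^k$-th powers; since $G_k^{\,2^k}$ is normal, $[w,x]=w^{-1}w^x$ lies in it as well, and $x^{2^k}\in G_k^{\,2^k}$ trivially.

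The core is three computations inside the class-$2$ subgroup above. Conjugation by $x$ cyclically shifts the factors, giving $w^x = y_0 M$ with $M = y_{2^k-1}\cdots y_1$ and $w = M y_0$, so that $[w,x] = w^{-1}w^x = [y_0,M] = \prod_{i=1}^{2^k-1} c_i$; the analogous computation with $y=y_0$ (which sends each $y_i\mapsto y_i c_i$) gives $[w,y] = \prod_{i=1}^{2^k-1} c_i$ as well. Pairing $c_i$ with $c_{2^k-i}$ and using $c_i^2=1$, every term cancels except the self-paired $i=2^{k-1}$, so both commutators equal $c_{2^{k-1}}=[y_0,y_{2^{k-1}}]$. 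For the order of $w$, the class-$2$ squaring formula gives $w^2 = z^{2^k}\prod_{d=1}^{2^k-1} c_d^{\,2^k-d}$; here $z^{2^k}=1$, and since $c_d$ has order $2$ the exponent $2^k-d$ matters only modulo $2$, leaving $\prod_{d\text{ odd}} c_d$, which for $k\ge 2$ again cancels in the symmetric pairs $\{d,2^k-d\}$, so $w^2=1$. I expect this parity-and-symmetry cancellation to be the main obstacle: it is where the evenness of $2^{k-1}$ (that is, $k\ge 2$) is genuinely used, and it is easy to mis-track the multiplicities $2^k-d$.

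Finally I would assemble the three elements. Each has order $2$: for $[y_0,y_{2^{k-1}}]$ this is the relation $[y_0,y_i]^2$ together with its nontriviality as a generator of $Z_k$ from Lemma~\ref{lem:order-Gk}, for $x^{2^k}$ it is again Lemma~\ref{lem:order-Gk}, and $w\neq 1$ because its image in the quotient $G_k/Z_k\cong W_k = C_2\wr C_{2^k}$ is the nontrivial ``all-ones'' element of the base group. The three elements pairwise commute, since $x^{2^k}$ and $[w,x]=c_{2^{k-1}}$ are central and $w$ commutes with both, so they generate an elementary abelian group; independence follows because $x^{2^k}$ and $[y_0,y_{2^{k-1}}]$ are distinct independent generators of $Z_k=R_k/N_k$ by Lemma~\ref{lem:order-Gk}, while $w\notin Z_k$ as it maps nontrivially to $G_k/Z_k\cong W_k$. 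This gives $\langle x^{2^k},w,[w,x]\rangle\cong C_2\times C_2\times C_2$, completing the proof.
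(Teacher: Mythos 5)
Your proposal is correct and follows essentially the same route as the paper: the identity $w = x^{-2^k}(xy)^{2^k}$ for membership in $G_k^{\,2^k}$, the class-$2$ structure of $\langle y_0,\dots,y_{2^k-1}\rangle$ with the symmetry $[y_0,y_i]=[y_0,y_{2^k-i}]$ (the paper's equation \eqref{eq:note}), and the resulting pair-cancellation in the products of commutators computing $w^2$ and $[w,x]=[w,y]$. Your version merely packages the squaring via the global multiplicity count $\prod_d c_d^{\,2^k-d}$ instead of the paper's iterative peeling, and spells out the independence argument for $\langle x^{2^k},w,[w,x]\rangle\cong C_2^{\,3}$ that the paper leaves implicit.
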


\begin{proof}
  Recall $H_k = \langle y_0, y_1, \ldots, y_{2^k-1} \rangle \le G_k$ and
  observe that $[H_k,H_k]$ is a central subgroup of exponent $2$
  in~$G_k$.  Furthermore, $[y^2,x] =1$ implies
  $y_{2^k-1}^{\, 2} = \ldots = y_0^{\, 2}$ in~$G_k$.  Also we observe that, for
  $1 \le i \le 2^k-1$, the relation $[y_0,y_i,x]=1$ implies
 \begin{equation}\label{eq:note}
   [y_0,y_{2^k-i}]^{-1} = [y_{2^k-i},y_0] = [y_0,y_i]^{x^{-i}} =
   [y_0,y_i] \qquad \text{in $G_k$.} 
 \end{equation}
  
  Thus Proposition \ref{pro:standard-commutator-id}(i) and (\ref{eq:note}) yield
  \begin{align*}
  w^2 &= ( y_{2^k-1}\cdots y_1y_0 )^2\\
  &= (y_{2^k-1}\cdots y_1)^2y_0^2[y_0,y_{2^{k}-1}]\ldots [y_0,y_{2^{k-1}+1}][y_0,y_{2^{k-1}}]
  [y_0,y_{2^{k-1}-1}]\ldots[y_0,y_1]\\
    &\,\,\vdots \\
  &= y_{2^k-1}^{\, 2} \cdots y_1^{\, 2} y_0^{\, 2}\\ 
  &= y^{2^{k+1}} \\
  &= 1.
  \end{align*}
  As $w \ne 1$ we deduce that $w$ has order~$2$.   Clearly, $w = x^{-2^k} (xy)^{2^k}$, so both $w$ and
  $[w,x]$ lie in $G_k^{\, 2^k}$.  
  
  Next, 
 \begin{align*}
   [w,x] & = (y_{2^k-1} \cdots
           y_1y_0)^{-1} (y_{2^k-1} \cdots y_1 y_0)^x \\
         &  = y_0^{\, -1} y_1^{\, -1} \cdots y_{2^k-2}^{\, -1} \cdot
           y_{2^k-1}^{\, -1} y_0 y_{2^k-1} \cdot y_{2^k-2} \cdots
           y_2 y_1 \\
         & = y_0^{\, -1} y_1^{\, -1} \cdots y_{2^k-2}^{-1} \cdot y_0 [y_0,
           y_{2^k-1}] \cdot y_{2^k-2} \cdots y_2y_1 \\
         & = y_0^{-1} y_1^{-1} \cdots y_{2^k-3}^{-1} \cdot
           y_{2^k-2}^{\, -1} y_0 y_{2^k-2} \cdot
           y_{2^k-3} \cdots y_2 y_1 \cdot [y_0,y_{2^k-1}]\\
         & \quad \vdots \\
         & = [y_0,y_1] [y_0,y_2] \cdots [y_0,y_{2^k-2}] [y_0,y_{2^k-1}]\\
         & =[y_0,y_{2^{k-1}}] && \text{by \eqref{eq:note}} \\
         & =[w,y],         
  \end{align*}
  as required.
\end{proof}

\begin{lemma} \label{lem:gamma-2-exp-4} The group
  $\gamma_2(G_k) \le G_k$ has exponent~$4$.
\end{lemma}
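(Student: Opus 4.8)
The plan is to exploit the central extension structure of $G_k$ rather than to compute directly. By Lemma~\ref{lem:order-Gk} the subgroup $Z_k = R_k/N_k$ is central of exponent $2$, and by construction $G_k/Z_k \cong F/R_k \cong W_k = C_2 \wr C_{2^k}$. The starting observation is that $\gamma_2(W_k)$ already has exponent $2$: writing $W_k = B \rtimes \langle \bar x\rangle$ with elementary abelian base group $B \cong C_2 \times \overset{2^k}{\cdots} \times C_2$ and cyclic top group, one has $\gamma_2(W_k) = [B,\langle \bar x\rangle] \le B$, so $\gamma_2(W_k)$ is elementary abelian.

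With this in hand, let $\rho \colon G_k \to W_k$ denote the natural projection, so that $\ker \rho = Z_k$ and $\rho(\gamma_2(G_k)) = \gamma_2(W_k)$. For an arbitrary $g \in \gamma_2(G_k)$ we then have $\rho(g^2) = \rho(g)^2 = 1$, whence $g^2 \in \ker\rho = Z_k$; since $Z_k$ has exponent $2$, this gives $g^4 = (g^2)^2 = 1$. Thus $\gamma_2(G_k)$ has exponent dividing~$4$. To see that the exponent is exactly~$4$, I would exhibit an element of order~$4$: taking $[y,x] = y_0^{\,-1} y_1 \in \gamma_2(G_k)$ and collecting exactly as in the proof of Lemma~\ref{lem:w-elements} — using that $[H_k,H_k]$ is central of exponent~$2$ and that $y_i^{\,2} = y^2$ — one finds $[y,x]^2 = y_0^{\,-2} y_1^{\,2} [y_0,y_1] = [y_0,y_1]$. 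As $[y_0,y_1]$ is one of the independent generators of $Z_k$ recorded in Lemma~\ref{lem:order-Gk}, it is nontrivial of order~$2$, so $[y,x]$ has order~$4$.

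The argument is short, and the only points needing care are that $\gamma_2(W_k)$ genuinely lands inside the elementary abelian base group $B$ (immediate from the wreath product structure) and that $[y_0,y_1] \ne 1$, so that the exponent is not smaller than~$4$; the latter is guaranteed by the independence statement in Lemma~\ref{lem:order-Gk}. The only (mild) obstacle is the bookkeeping in the collection computation for $[y,x]^2$, but this is entirely parallel to the computation of $w^2$ already carried out in Lemma~\ref{lem:w-elements}, so I would simply refer back to it.
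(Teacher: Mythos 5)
Your proof is correct, but the key step is handled by a genuinely different mechanism than the paper's. For the upper bound on the exponent, the paper stays inside $G_k$: it notes that $[H_k,H_k]$ is central of exponent $2$ and then invokes the collection formula of Proposition~\ref{pro:standard-commutator-id} to reduce the claim to the single normal generator $[y,x]$ of $\gamma_2(G_k)$, for which it computes $[y,x]^4=1$ directly. You instead pass to the quotient: since $\gamma_2(W_k)$ lies in the elementary abelian base group of $W_k=C_2\wr C_{2^k}$, every $g\in\gamma_2(G_k)$ has $g^2\in Z_k$, and $Z_k$ is central elementary abelian by construction and Lemma~\ref{lem:order-Gk}, so $g^4=1$. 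Your route is shorter and arguably more robust --- it avoids the slightly delicate appeal to the collection formula (one has to check that the correction terms $K(a,b)$ genuinely vanish or have the right order) and in fact proves the stronger statement that $\gamma_2(G_k)^2\subseteq Z_k$, i.e.\ that \emph{every} element of $G_k$ mapping into the base group of $W_k$ has order dividing $4$. What the paper's in-group computation buys is that it is the same technique (collection plus centrality of $[H_k,H_k]$) reused throughout Section~3, so it keeps the toolkit uniform. For the lower bound you and the paper do the same computation, $[y,x]^2=[y_0,y_1]$ (equal to $[y_1,y_0]$ since $[H_k,H_k]$ has exponent $2$), and you are right that its nontriviality comes from the independence of the generators of $Z_k$ recorded in Lemma~\ref{lem:order-Gk} --- a point the paper leaves implicit. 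No gaps.
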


\begin{proof}
  Recall that
  $H_k = \langle y_0, y_1, \ldots, y_{2^k-1} \rangle Z_k \le G_k$
  satisfies: $[H_k,H_k]$ is a central subgroup of exponent $2$
  in~$G_k$.  Hence,
  \eqref{equ:commutator-formula-1} shows that it
  suffices to prove that $[y,x]$ has order~$4$.  But
  $[y,x] = y_0^{\, -1} y_1$; thus 
  $y_0^{\, 2} = x^{-1} y_0^{\, 2} x = y_1^{\, 2}$ implies
  $[y,x]^2 = y_0^{\, -2} y_1^{\, 2} [y_1,y_0^{-1}]=[y_1,y_0]$, and therefore  
  $[y,x]^4 = [y_1,y_0]^2=1$.
\end{proof}

\begin{lemma}\label{lem:squared-commutators}
In the group $G_k$, the following holds:
\[
[y,x,\overset{i}\ldots,x]^2\in [H_k,H_k]\cap \gamma_{2i+1}(G_k),\quad \text{ for } i\ge 1.
\]
Furthermore, $[y,x,\overset{i}\ldots,x]^2\equiv [y,x,\overset{2i-1}\ldots,x,y]$ modulo $\gamma_{2i+2}(G_k)$.
\end{lemma}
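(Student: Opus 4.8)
Throughout write $c_j = [y,x,\overset{j}\ldots,x]$ for $j\ge 1$ and set $c_0 = y$, so that $c_j = [c_{j-1},x]$ for $j\ge 1$, with $c_j\in\gamma_{j+1}(G_k)$, and since $H_k\trianglelefteq G_k$ with $y\in H_k$ we have $c_j\in H_k$ for all $j\ge 0$. Recall from the proof of Lemma~\ref{lem:w-elements} that $[H_k,H_k]$ is central of exponent $2$ in $G_k$; in particular every commutator $[c_a,c_b]$ with $a,b\ge 0$ lies in $[H_k,H_k]$ and is therefore central of order dividing $2$.

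The heart of the matter is the identity $c_i^{\,2} = [c_{i-1},c_i]$ for $i\ge 1$, which I would prove by induction on $i$. For the base case $i=1$ one invokes the defining relation $[y^2,x]=1$: expanding
\[
1 = [c_0^{\,2},x] = [y,x]^{y}[y,x] = c_1^{\,c_0}c_1 = c_1^{\,2}[c_1,c_0]
\]
(the last step because $[c_1,c_0]$ is central) and using exponent $2$ gives $c_1^{\,2}=[c_0,c_1]\in[H_k,H_k]$. For the inductive step, assuming $c_{i-1}^{\,2}\in[H_k,H_k]$, hence central, the same expansion $1=[c_{i-1}^{\,2},x]=c_i^{\,c_{i-1}}c_i=c_i^{\,2}[c_i,c_{i-1}]$ yields $c_i^{\,2}=[c_{i-1},c_i]\in[H_k,H_k]$, closing the induction. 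Since $c_{i-1}\in\gamma_i(G_k)$ and $c_i\in\gamma_{i+1}(G_k)$, this at once gives $c_i^{\,2}=[c_{i-1},c_i]\in[H_k,H_k]\cap\gamma_{2i+1}(G_k)$, the first assertion.

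For the second assertion I would exploit that all the $[c_a,c_b]$ are central. From the Hall--Witt (Jacobi) identity applied to $c_a,c_b,x$, together with the vanishing of $[[c_a,c_b],x]$, one extracts the shift relation
\[
[c_{a+1},c_b]\equiv[c_a,c_{b+1}]\pmod{\gamma_{a+b+4}(G_k)},
\]
where every correction is a commutator of higher weight and every sign is immaterial because $[H_k,H_k]$ has exponent $2$. Applying this relation $i-1$ times to $c_i^{\,2}=[c_{i-1},c_i]$, a term of weight $2i+1$ so that each step is valid modulo $\gamma_{2i+2}(G_k)$, successively moves all $x$-entries onto one factor:
\[
c_i^{\,2}=[c_{i-1},c_i]\equiv[c_{i-2},c_{i+1}]\equiv\cdots\equiv[c_0,c_{2i-1}]=[y,c_{2i-1}]\pmod{\gamma_{2i+2}(G_k)}.
\]
Finally $[y,c_{2i-1}]=[c_{2i-1},y]^{-1}\equiv[c_{2i-1},y]=[y,x,\overset{2i-1}\ldots,x,y]$ modulo $\gamma_{2i+2}(G_k)$, again by exponent $2$, which is the claimed congruence.

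The main obstacle is the bookkeeping in the shift relation: one must verify that, modulo the next term of the lower central series, the Jacobi expansion of $[[c_a,c_b],x]$ is genuinely $[c_{a+1},c_b][c_a,c_{b+1}]$ up to central factors of weight exceeding $2i+1$, and that the error accumulated over the $i-1$ iterations stays inside $\gamma_{2i+2}(G_k)$. Both are routine once one records that each $[c_a,c_b]$ is central of order $2$, so that all rearrangement corrections either vanish or are absorbed. A purely computational alternative --- writing $c_i\equiv\prod_j y_j^{(-1)^{i-j}\binom{i}{j}}$ modulo $[H_k,H_k]$, squaring in the class-$2$ group $H_k$, and evaluating $\sum_j\binom{i}{j}\binom{i}{j+d}=\binom{2i}{i+d}$ modulo $2$ --- recovers $c_i^{\,2}$ as a product of the elements $[y_0,y_d]$, but then deciding which term survives modulo $\gamma_{2i+2}(G_k)$ is precisely the delicate point that the identity $c_i^{\,2}=[c_{i-1},c_i]$ circumvents.
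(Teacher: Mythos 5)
Your proof is correct and follows essentially the same route as the paper's: establish $c_i^{\,2}=[c_{i-1},c_i]$ by induction using the centrality of $[H_k,H_k]$, then shift via the Hall--Witt identity to reach $[y,x,\overset{2i-1}\ldots,x,y]$ modulo $\gamma_{2i+2}(G_k)$. The only (cosmetic) difference is your base case, where deriving $c_1^{\,2}=[y,[y,x]]$ directly from the relation $[y^2,x]=1$ gives membership in $\gamma_3(G_k)$ for free, whereas the paper first computes $[y,x]^2=[y_1,y_0]$ and then checks separately that this lies in $\gamma_3(G_k)$.
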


\begin{proof}
For $i=1$, we only need to show that $[y,x]^2\in \gamma_3(G_k)$, as from the previous proof, we already have $[y,x]^2=[y_1,y_0]\in [H_k,H_k]$. Now
\begin{align*}
[y,x]^2&=[y_1,y_0] =[x^{-1}yx,y]\equiv [x^{-1},y][x,y]\quad \text{mod }\gamma_3(G_k)\\
&\equiv [x,y]^{2^{k+1}-1}[x,y]\quad \text{mod }\gamma_3(G_k)\\
&= 1 \quad \text{mod }\gamma_3(G_k),
\end{align*}
by Lemma~\ref{lem:gamma-2-exp-4}.

Next, for $i\ge 2$, we have
\[
[[y,x,\overset{i-1}\ldots, x],x]^2=([y,x,\overset{i-1}\ldots, x]^{-1})^2 ([y,x,\overset{i-1}\ldots, x]^x)^2 [[y,x,\overset{i-1}\ldots, x]^x, [y,x,\overset{i-1}\ldots, x]^{-1}].
\]
By induction $[y,x,\overset{i-1}\ldots, x]^2\in [H_k,H_k]\le Z(G_k)$, hence
\begin{align*}
[[y,x,\overset{i-1}\ldots, x],x]^2&=[[y,x,\overset{i-1}\ldots, x]^x, [y,x,\overset{i-1}\ldots, x]^{-1}]\\
&=[[y,x,\overset{i-1}\ldots, x][y,x,\overset{i}\ldots, x], [y,x,\overset{i-1}\ldots, x]^{-1}]\\
&=[[y,x,\overset{i}\ldots, x], [y,x,\overset{i-1}\ldots, x]^{-1}]\in  [H_k,H_k]\cap \gamma_{2i+1}(G_k),
\end{align*}
as required.

For the final statement, for ease of notation we set $c_1=y$ and, for $i \ge 2$,
  \[
  c_i = [y,x,\overset{i-1}{\ldots},x] \qquad \text{and} \qquad z_i =
  [c_{i-1},y] = [y,x,\overset{i-2}{\ldots},x,y].
  \]
  We observe that
  $z_i^{\, 2} = 1$ for $i \ge 2$; furthermore, the
  elements $z_i \in [H_k,H_k] \subseteq Z_k$ are central in~$G_k$.  We
  claim that
  \begin{equation} \label{equ:comm-ci-cj}
    [c_i,c_j] \equiv z_{i+j} \;\mathrm{mod}\;
      \gamma_{i+j+1}(G_k) \qquad \text{for $i > j \ge 1$.}  
  \end{equation}
  Indeed $[c_i,c_1] = [c_i,y] = z_{i+1}$ and, modulo
  $\gamma_{i+j+1}(G_k)$, the Hall--Witt identity gives
  \[
  1 \equiv [c_i,c_{j-1},x] [c_{j-1},x,c_i] [x,c_i,c_{j-1}] \equiv [c_j,c_i]
  [c_{i+1},c_{j-1}],
  \]
  hence $[c_i,c_j] \equiv [c_{i+1},c_{j-1}]$ from which the result
  follows by induction.
\end{proof}

\begin{lemma} \label{lem:contained} The group $G_k$ satisfies $G_k^{\, 2} \subseteq \langle x^{2}, y^2\rangle\gamma_{2}(G_k)$ and
\begin{equation*}
 G_k^{\, 2^j} \subseteq \langle x^{2^j},[y,x,\overset{2^{j}-3}\ldots,x,y]\rangle\gamma_{2^{j}}(G_k) \subseteq \langle x^{2^j}\rangle\gamma_{2^{j}-1}(G_k)\quad \text{ for }j\ge 2.
  \end{equation*}
In particular,
  \begin{equation*}
        G_k^{\, 2^k} \subseteq \langle x^{2^k},w,[w,x]\rangle.
  \end{equation*}
\end{lemma}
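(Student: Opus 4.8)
The plan is to reduce all three assertions to one computation: bounding an arbitrary $2^j$-th power $g^{2^j}$ modulo the appropriate lower central term. Writing $G_k = \langle x\rangle H_k$ with $H_k$ the base subgroup generated by the $y_i$, every element has the form $g = x^a v$ with $v \in H_k$, and $G_k^{\,2^j}$ is generated by such powers; since each target on the right is a genuine subgroup (note $z_{2^j-1} := [y,x,\overset{2^j-3}\ldots,x,y]$ is central of order $2$, while $\gamma_i(G_k)\trianglelefteq G_k$), it suffices to place each $g^{2^j}$ inside it. The case $j=1$ is immediate: in the abelianisation $G_k/\gamma_2(G_k)$, generated by the images of $x$ and $y$, every square lies in $\langle \bar x^2, \bar y^2\rangle$, whence $G_k^{\,2} \subseteq \langle x^2, y^2\rangle \gamma_2(G_k)$.

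For $j \ge 2$ I would apply the collection formula \eqref{equ:commutator-formula-1} to $(x^a v)^{2^j}$ and determine which factors survive modulo $\gamma_{2^j}(G_k)$. First $v^{2^j}=1$: as $y_i^{\,2}=y^2$ is central and $[H_k,H_k]$ has exponent $2$, one gets $v^2\in\langle y^2\rangle[H_k,H_k]$ and hence $v^4=1$. Next, the factor $[v,x^a,\overset{m-1}\ldots,x^a]\in\gamma_m(G_k)$ carries the exponent $\binom{2^j}{m}$, of $2$-adic valuation $j-v_2(m)$; combining this with $\gamma_2(G_k)$ having exponent $4$ (Lemma~\ref{lem:gamma-2-exp-4}) and with the fact that squaring a commutator deepens its weight (Lemma~\ref{lem:squared-commutators}), every factor with $v_2(m)\le j-2$ is trivial and the $m=2^j$ factor vanishes modulo $\gamma_{2^j}(G_k)$, leaving only $m=2^{j-1}$. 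That surviving factor is $\bigl([v,x^a,\overset{2^{j-1}-1}\ldots,x^a]^2\bigr)^{\mathrm{odd}}$, which by multilinearity modulo higher weight reduces to a power of $c_{2^{j-1}}^{\,2}\equiv z_{2^j-1}\pmod{\gamma_{2^j}(G_k)}$ (Lemma~\ref{lem:squared-commutators}), so $g^{2^j}\equiv x^{2^j a}z_{2^j-1}^{\,\varepsilon}\pmod{\gamma_{2^j}(G_k)}$ with $\varepsilon\in\{0,1\}$. The one subtlety is the error term $K(x^a,v)$ in \eqref{equ:commutator-formula-1}: its weight-$\ge 2^j$ commutators already lie in $\gamma_{2^j}(G_k)$, while its power-terms are squares of commutators of $v$-weight $\ge 2$, and any such commutator lies in the central exponent-$2$ subgroup $[H_k,H_k]$ and so squares to $1$. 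This yields the first inclusion; the second is automatic since $z_{2^j-1}\in\gamma_{2^j-1}(G_k)$ and $\gamma_{2^j}(G_k)\subseteq\gamma_{2^j-1}(G_k)$.

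For the ``in particular'' statement I would specialise $j=k$. Comparing the collection computation for $g=xy$ with the identity $(xy)^{2^k}=x^{2^k}w$ of Lemma~\ref{lem:w-elements} forces $\varepsilon=1$ there, i.e. $w\equiv z_{2^k-1}\pmod{\gamma_{2^k}(G_k)}$; hence $\langle x^{2^k},z_{2^k-1}\rangle\gamma_{2^k}(G_k)=\langle x^{2^k},w\rangle\gamma_{2^k}(G_k)$, and since $x^{2^k}$ and $w$ both lie in $G_k^{\,2^k}$ it remains only to show that the residual intersection $G_k^{\,2^k}\cap\gamma_{2^k}(G_k)$ is contained in $\langle[w,x]\rangle$.

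I expect this last step to be the main obstacle, as it cannot follow by crude set containment: $\gamma_{2^k}(G_k)$ is strictly larger than $\langle[w,x]\rangle$ (for instance it contains $c_{2^k}\equiv w\pmod{[H_k,H_k]}$, which is not in $\langle x^{2^k},w,[w,x]\rangle$), so what must be controlled is precisely which elements of $\gamma_{2^k}(G_k)$ genuinely occur as $2^k$-th powers. The plan is to analyse the top of the lower central series through the module structure of $H_k^{\mathrm{ab}}$ over the shift $\sigma\colon y_i\mapsto y_{i+1}$, which has order $2^k$ because $x^{2^k}$ is central; here $\sigma-1$ is nilpotent of degree $2^k$, the element $[w,x]=[y_0,y_{2^{k-1}}]$ is the unique commutator surviving at the deepest layer, and one pushes the collection analysis one level further, exploiting the periodicity $y_{i+2^k}=y_i$, to show that the $\gamma_{2^k}(G_k)$-tail of every $g^{2^k}$ lies in $\langle[w,x]\rangle$. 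Together with Lemma~\ref{lem:w-elements} this gives $G_k^{\,2^k}=\langle x^{2^k},w,[w,x]\rangle$ and hence Proposition~\ref{pro:index-Gk-Gk-hoch-p}.
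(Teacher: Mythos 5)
Your treatment of the two displayed inclusions is essentially correct and amounts to the same computation as the paper's, just packaged differently: where you apply the collection formula \eqref{equ:commutator-formula-1} to $(x^a v)^{2^j}$ and kill every factor except $m=2^{j-1}$ using $4\mid\binom{2^j}{m}$ together with Lemma~\ref{lem:gamma-2-exp-4}, the paper iterates the identity $(ab)^2=a^2b^2[b,a]$ (using $\gamma_2(G_k)^2\le [H_k,H_k]\le Z(G_k)$) to obtain the \emph{exact} identity $(x^mh)^{2^j}=x^{2^jm}\,[h,x^m,(x^m)^2,\ldots,(x^m)^{2^{j-2}}]^2\,[h,x^m,\ldots,(x^m)^{2^{j-1}}]$ and then invokes Lemma~\ref{lem:squared-commutators}. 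Your analysis of the error term $K(x^a,v)$ is right (every commutator of $v$-weight at least $2$ lies in the central exponent-$2$ subgroup $[H_k,H_k]$), though you should flag that Lemma~\ref{lem:squared-commutators} is stated only for $[y,x,\overset{i}{\ldots},x]$ and that the step from there to $[v,x^a,\overset{i}{\ldots},x^a]$ for arbitrary $v\in H_k$ is exactly the ``multilinearity modulo higher weight'' you gesture at rather than prove.

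The genuine gap is the ``in particular'' statement, which is the part the rest of the paper actually uses. You correctly reduce it to showing $G_k^{\,2^k}\cap\gamma_{2^k}(G_k)\subseteq\langle[w,x]\rangle$, correctly observe that this does not follow from the congruence information you have kept (since $\gamma_{2^k}(G_k)$ is strictly larger than $\langle[w,x]\rangle$), and then stop: what follows is a plan, not an argument. This is precisely where the paper's insistence on an exact identity, rather than a congruence modulo $\gamma_{2^j}(G_k)$, pays off: for $j=k$ every generator of $G_k^{\,2^k}$ \emph{equals} $x^{2^km}A_h^{\,2}B_h$ with $A_h=[h,x^m,\ldots,(x^m)^{2^{k-2}}]$ and $B_h=[h,x^m,\ldots,(x^m)^{2^{k-1}}]$, so the final statement reduces to identifying the explicit elements $A_h^{\,2}B_h$ inside $\langle w,[w,x]\rangle$ --- the case $h=y$, $m=1$ being exactly the identity $w=[y,x,x^2,\ldots,x^{2^{k-2}}]^2[y,x,x^2,\ldots,x^{2^{k-1}}]$ coming from Lemma~\ref{lem:w-elements} --- and no intersection $G_k^{\,2^k}\cap\gamma_{2^k}(G_k)$ ever needs to be computed. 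By passing to classes modulo $\gamma_{2^j}(G_k)$ at the outset you have discarded the information needed at the end, and the module-theoretic recovery you sketch (shift action, periodicity $y_{i+2^k}=y_i$) would have to be carried out in full before the lemma, and with it Proposition~\ref{pro:index-Gk-Gk-hoch-p}, can be considered proved.
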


\begin{proof}
 For $j\ge 2$, we note that $(xy)^{2^j}=x^{2^j}[y,x,x^2,\ldots,x^{2^{j-2}}]^2[y,x,x^2,\ldots,x^{2^{j-1}}]$ by repeated application of the rule $(ab)^2=a^2b^2[b,a]$, for $a,b\in G_k$, plus using the fact that $\gamma_2(G_k)^2\le [H_k,H_k]\le Z(G_k)$. 
  
 Next, a 
 general element of $G_k$ is of the form $x^mh$ for some $m\in \mathbb{N}_0$ and $h\in H_k$. 
 As before, we have 
 $$(x^mh)^{2^j}=x^{2^jm}[h,x^m,(x^m)^2,\ldots,(x^m)^{2^{j-2}}]^2[h,x^m,(x^m)^2,\ldots,(x^m)^{2^{j-1}}].$$ Since $[y,x,x^2,\ldots,x^{2^{j-2}}]^2= [y,x,\overset{2^{j-1}-1}\ldots,x]^2$,  we deduce from Lemma~\ref{lem:squared-commutators} that $(x^mh)^{2^j}\in \langle x^{2^j},
 [y,x,\overset{2^{j}-3}\ldots,x,y]\rangle \gamma_{2^j}(G_k)$. Hence the result.
 
 The final statement follows from $w=[y,x,x^2,\ldots,x^{2^{k-2}}]^2[y,x,x^2,\ldots,x^{2^{k-1}}]$. 
\end{proof}

We state the following result for completeness.
\begin{lemma} \label{lem:gamma-m-modulo-gamma-m+1}
  The group $G_k$ has nilpotency class~$2^k+1$, and
  $\gamma_m(G_k) / \gamma_{m+1}(G_k)$ is elementary abelian of rank at
  most~$2$ for $2 \le m \le 2^k+1$.
\end{lemma}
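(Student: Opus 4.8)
The plan is to analyse each quotient $\gamma_m(G_k)/\gamma_{m+1}(G_k)$ through the two families of left-normed commutators already in play: the single-$y$ commutators $c_m = [y,x,\overset{m-1}{\ldots},x]$ of weight $m$, and the double-$y$ commutators $z_m = [c_{m-1},y]$, which lie in the central subgroup $[H_k,H_k]$ of exponent~$2$. First I would establish the structural claim that, for every $m \ge 2$, the quotient $\gamma_m(G_k)/\gamma_{m+1}(G_k)$ is generated by the images of $c_m$ and $z_m$. Since $G_k = \langle x,y\rangle$, the group $\gamma_m(G_k)$ is generated modulo $\gamma_{m+1}(G_k)$ by basic commutators of weight $m$ in $x$ and $y$; those of weight~$1$ in $y$ reduce to $c_m$, any commutator of weight $\ge 3$ in $y$ vanishes because it involves an element of the central subgroup $[H_k,H_k]$, and those of weight exactly~$2$ in $y$ can be rewritten, via \eqref{equ:comm-ci-cj} together with the centrality of $[H_k,H_k]$, as powers of $z_m$ modulo $\gamma_{m+1}(G_k)$. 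This already yields rank at most~$2$.

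Next I would verify that these quotients are elementary abelian. The element $z_m$ has order dividing~$2$ since it lies in $[H_k,H_k]$, and for the other generator Lemma~\ref{lem:squared-commutators} gives $c_m^{\,2} = [y,x,\overset{m-1}{\ldots},x]^2 \in \gamma_{2m-1}(G_k)$; as $2m-1 \ge m+1$ for $m \ge 2$, we get $c_m^{\,2} \in \gamma_{m+1}(G_k)$, so the image of $c_m$ also has order dividing~$2$. Hence $\gamma_m(G_k)/\gamma_{m+1}(G_k)$ is elementary abelian of rank at most~$2$ in the stated range.

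To pin down the nilpotency class I would treat the two bounds separately. For the lower bound I would identify $c_{2^k}$ modulo $[H_k,H_k]$: working in the abelian quotient $H_k/[H_k,H_k]$ and writing the conjugation action of $x$ multiplicatively as $\sigma$ in $\mathbb{F}_2[C_{2^k}]$, the commutator $c_{2^k}$ corresponds to $y^{(\sigma-1)^{2^k-1}}$, and since $\binom{2^k-1}{j} \equiv 1 \pmod 2$ for all $0 \le j \le 2^k-1$ one has $(\sigma-1)^{2^k-1} = \sum_{j=0}^{2^k-1}\sigma^j$, whence $c_{2^k} \equiv w \bmod [H_k,H_k]$. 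Because $[H_k,H_k]$ is central, Lemma~\ref{lem:w-elements} then gives $c_{2^k+1} = [c_{2^k},x] = [w,x] = [y_0,y_{2^{k-1}}] \ne 1$, so $\gamma_{2^k+1}(G_k) \ne 1$. For the upper bound, the same computation shows $c_{2^k+1} \in [H_k,H_k]$ is central (equivalently $(\sigma-1)^{2^k} = 0$ in $\mathbb{F}_2[C_{2^k}]$), so that $c_{2^k+2} = [c_{2^k+1},x] = 1$ and $z_{2^k+2} = [c_{2^k+1},y] = 1$; by the generation claim applied at $m = 2^k+2$ this forces $\gamma_{2^k+2}(G_k) = \gamma_{2^k+3}(G_k)$, and nilpotency of the finite $2$-group $G_k$ yields $\gamma_{2^k+2}(G_k) = 1$.

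The main obstacle is the module-theoretic computation underpinning the nilpotency class, namely identifying $c_{2^k}$ with $w$ modulo $[H_k,H_k]$ and showing that $(\sigma-1)^{2^k}$ annihilates the top layer; this rests on the congruence $\binom{2^k-1}{j} \equiv 1 \pmod 2$ and on the identity $y_{2^k} = y_0$ arising from the relation $[x^{2^k},y] = 1$, cf.\ \eqref{eq:note}. Once this is in place, the remaining steps amount to bookkeeping with basic commutators and to the exponent-$2$ centrality of $[H_k,H_k]$, both of which are already furnished by the proofs of Lemmata~\ref{lem:w-elements} and~\ref{lem:squared-commutators}.
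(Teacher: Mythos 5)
Your argument is correct, and it reaches the same three conclusions (generation of $\gamma_m(G_k)$ modulo $\gamma_{m+1}(G_k)$ by $c_m$ and $z_m$, exponent $2$ of the quotients, and class exactly $2^k+1$) by a genuinely more self-contained route. The paper disposes of the first and third points in one stroke by invoking \cite[Prop.~2.6]{KT}, which describes the lower central series of the wreath product $W_k$, together with the fact that $G_k$ is a central extension of $Z_k$ by $W_k$; only the elementary-abelian claim is argued directly, via Lemma~\ref{lem:squared-commutators}, exactly as you do. You instead reprove the relevant content of \cite[Prop.~2.6]{KT} inside $G_k$: the basic-commutator classification (weight $1$ in $y$ gives $c_m$, weight $2$ reduces to $z_m$ via \eqref{equ:comm-ci-cj}, weight $\ge 3$ dies in the central exponent-$2$ subgroup $[H_k,H_k]$), and the $\mathbb{F}_2[C_{2^k}]$ computation with $(\sigma-1)^{2^k-1}=\sum_j \sigma^j$ and $(\sigma-1)^{2^k}=0$ to identify $c_{2^k}$ with $w$ and to show $c_{2^k+1}=[w,x]\neq 1$ while $\gamma_{2^k+2}(G_k)=1$. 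What this buys is independence from the companion paper and a cleaner justification of the lower bound on the class (the paper's line ``$\gamma_{2^k+1}(G_k)Z_k=\langle [w,x]\rangle Z_k$'' is rather terse, since $[w,x]$ already lies in $Z_k$); the cost is length. One small point of hygiene: the quotient $H_k/[H_k,H_k]$ is not quite a free $\mathbb{F}_2[C_{2^k}]$-module on $y$ because of the central element $y^2$ (recall $y_i^{\,2}=y_0^{\,2}$ for all $i$), so your congruence should read $c_{2^k}\equiv w$ modulo the central subgroup $[H_k,H_k]\langle y^2\rangle$; since that subgroup is central the identities $[c_{2^k},x]=[w,x]$ and $[c_{2^k+1},x]=[c_{2^k+1},y]=1$ are unaffected.
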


\begin{proof}
   Let $2 \le m \le 2^k+1$.  Since $G_k$ is a central extension of $Z_k$
  by $W_k$, we deduce from \cite[Prop.~2.6]{KT} that
  \[
  \gamma_m(G_k) = \langle [y,x,\overset{m-1}{\ldots},x],
  [y,x,\overset{m-2}{\ldots},x,y] \rangle \; \gamma_{m+1}(G_k),
  \]
  and Lemma~\ref{lem:squared-commutators} shows that
  $\gamma_m(G_k) / \gamma_{m+1}(G_k)$ is elementary abelian of rank at
  most~$2$.  Again by \cite[Prop.~2.6]{KT}, the nilpotency
  class of $G_k$ is at least~$2^k$.  Moreover,
  $\gamma_{2^k+1}(G_k) Z_k = \langle [w,x] \rangle Z_k$.  We conclude
  that $G_k$ has nilpotency class precisely~$2^k+1$.
\end{proof}

\begin{proof}[Proof of Proposition~\ref{pro:index-Gk-Gk-hoch-p}]
  Apply
  Lemmata~\ref{lem:w-elements}
  and~\ref{lem:contained}.
\end{proof}

From Lemma~\ref{lem:order-Gk} and
Proposition~\ref{pro:index-Gk-Gk-hoch-p} we deduce that
\[
\log_2 \lvert G : G^{2^k} \rvert = \log_2 \lvert G_k : G_k^{\, 2^k}
\rvert = 2^k+2^{k-1}+k-1. 
\]
On the other hand, we observe from \cite[Prop.~2.6]{KT}
that
\[
\log_2 \lvert G : Z G^{2^k} \rvert = \log_2 \lvert W_k : W_k^{\, 2^k}
\rvert =  2^k+k-1,
\]
hence
\[
\log_2 \lvert ZG^{2^k} : G^{2^k} \rvert = 2^k+2^{k-1}+k-1 -
(2^k+k-1) = 2^{k-1}.
\]

Thus \eqref{equ:xi-1/3-P} follows from
\begin{equation} \label{equ:xi-for-P}
 \varliminf_{i\to \infty} \frac{\log_2 \lvert ZG^{2^i} : G^{2^i}
    \rvert}{\log_2 \lvert G : G^{2^i} \rvert} =
\lim_{i \to \infty} \frac{2^{i-1}}{2^i+2^{i-1}+i-1} = \nicefrac{1}{3}.
\end{equation}

\medskip

\begin{remark}
 As noted in \cite{KT}, one sometimes encounters a variant of the $2$-power series in the literature, the \emph{iterated} $2$-power series of $G$, which is recursively given by
  \[
  \mathcal{I} \colon I_0(G) = G, \quad \text{and} \quad I_j(G) =
  I_{j-1}(G)^2 \quad \text{for $j \geq 1$.}
  \]
  By a small modification of the proof of Lemma~\ref{lem:contained} we
  obtain inductively 
    \begin{equation*}
    I_j(G_k) \subseteq \big(\langle x^{2^{j-1}} \rangle
       \gamma_{2^{j-1}}(G_k) \big)^2 \subseteq  \langle x^{2^j}, [y,x,\overset{2^{j}-3}\ldots,x,y] \rangle
       \gamma_{2^j}(G_k) \quad \text{for $j \ge 2$,}
  \end{equation*}
  and furthermore $ I_k(G_k) \subseteq  \langle x^{2^k}, w, [w,x]\rangle$.
   With Proposition~\ref{pro:index-Gk-Gk-hoch-p}  this yields
  $G_k^{\, 2^k} \subseteq I_k(G_k) \subseteq   
 \langle x^{2^k}, w, [w,x]\rangle = G_k^{\, 2^k}$.  We conclude that  
    the $2$-power series $\mathcal{P}$ and the iterated $2$-power series $\mathcal{I}$ of~$G$ coincide.

  Further we note another natural filtration series
  $\mathcal{N} \colon N_i$, $i\in \mathbb{N}_0$, of $G$, consisting of
  the open normal subgroups defined in Section~\ref{sec:presentation},
  where we set $N_0=G$.  As $N_i \le G^{2^i}$ with
    $\log_2 \lvert G^{2^i} : N_i \rvert \le 4$
    for all
  $i \in \mathbb{N}_0$, we see that the
  filtration series $\mathcal{P}$ and $\mathcal{N}$ induce the same
  Hausdorff dimension function on~$G$.
\end{remark}


\section{The normal Hausdorff spectra of $G$ with respect to the
  lower $2$-series, the dimension subgroup series and the Frattini series}\label{sec:other-series}

We continue to use the notation set up in
Section~\ref{sec:presentation} and work with the finite quotients
$G_k$, $k \in \mathbb{N}$, of the pro-$2$ group~$G$.  In order to complete the proof of Theorem~\ref{thm:main-thm}, we first compute
 the lower central series, the lower $2$-series, the dimension
subgroup series and the Frattini series of~$G_k$.

\begin{proposition} \label{pro:lower-central-Gk} The group $G_k$ is
  nilpotent of class $2^k+1$; its lower central series satisfies
  \[
  G_k = \gamma_1(G_k) = \langle x,y \rangle \; \gamma_2(G_k) \quad
  \text{with} \quad G_k/\gamma_2(G_k)  \cong C_{2^{k+1}} \times
  C_{4}
 \]
 and, for $1 \le i \le 2^{k-1}$,
 \begin{align*}
   \gamma_{2i}(G_k) & = \quad \langle [y,x,\overset{2i-1}{\ldots},x] \rangle
                      \; \gamma_{2i+1}(G_k), \\
   \gamma_{2i+1}(G_k) & = \left\{\begin{array}{cc}
                                \! \langle [y,x,\overset{2i}{\ldots},x],
                        [y,x,\overset{2i-1}{\ldots},x,y] \rangle \;
                        \gamma_{2i+2}(G_k) & \text{ for }i\ne 2^{k-1}\\
                        \langle [y,x,\overset{2i}{\ldots},x] \rangle \;
                        \gamma_{2i+2}(G_k) \qquad \qquad \qquad \,\,& \text{ for }i= 2^{k-1}
                                \end{array}\right.
 \end{align*} 
 with
 \[
 \gamma_{2i}(G_k)/\gamma_{2i+1}(G_k) \cong C_2 \quad \text{and} \quad
 \gamma_{2i+1}(G_k)/\gamma_{2i+2}(G_k) \cong \left\{\begin{array}{cc}
						    C_2 \times C_2 & \text{ for }i\ne 2^{k-1}\\
						    C_2 &\text{ for }i=2^{k-1}.                                                     
                                                    \end{array}\right.
 \]
 \end{proposition}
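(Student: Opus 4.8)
The plan is to separate the two sources of the lower central factors: a part descending from the wreath quotient $W_k$ and a central part coming from $Z_k$. Several ingredients are already in place. The nilpotency class $2^k+1$ and the rank bound come from Lemma~\ref{lem:gamma-m-modulo-gamma-m+1} (via \cite[Prop.~2.6]{KT}): for $2\le m\le 2^k+1$ one has $\gamma_m(G_k)=\langle c_m,z_m\rangle\,\gamma_{m+1}(G_k)$, where I abbreviate $c_m=[y,x,\overset{m-1}{\ldots},x]$ and $z_m=[c_{m-1},y]=[y,x,\overset{m-2}{\ldots},x,y]$ as in Lemma~\ref{lem:squared-commutators}, and each factor is elementary abelian of rank at most $2$. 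The abelianisation is read off from the presentation \eqref{eq:pres-for-G-k}: killing all commutators leaves $G_k/\gamma_2(G_k)=\langle x\rangle\times\langle y\rangle\cong C_{2^{k+1}}\times C_4$, since the relator $[x^{2^k},y]$ is itself a commutator and $x,y$ retain orders $2^{k+1}$ and $4$. So the remaining task is to decide, for each $m$, which of $c_m,z_m$ survive and are independent modulo $\gamma_{m+1}(G_k)$.

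For the $c_m$-direction I would project along $G_k\twoheadrightarrow W_k$ with central kernel $Z_k$. Identifying the base group of $W_k=C_2\wr C_{2^k}$ with $\mathbb{F}_2[t]/(t-1)^{2^k}$, so that $x$-conjugation is multiplication by $t$ and $c_m$ corresponds to $(t-1)^{m-1}$, one gets $\gamma_m(W_k)/\gamma_{m+1}(W_k)\cong C_2$ generated by the image of $c_m$ for $2\le m\le 2^k$, while $c_{2^k+1}$ maps to $1$. Thus $c_m$ contributes a factor $C_2$ to $\gamma_m(G_k)/\gamma_{m+1}(G_k)$ for $2\le m\le 2^k$, and $c_{2^k+1}$ is central. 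The $z_m$-direction lives in $[H_k,H_k]\le Z_k$, which by Lemma~\ref{lem:order-Gk} is elementary abelian of rank $2^{k-1}$, spanned by the $[y_0,y_i]$, $1\le i\le 2^{k-1}$. The key input here is Lemma~\ref{lem:squared-commutators}: the congruence $c_j^{\,2}\equiv z_{2j-1}\pmod{\gamma_{2j}(G_k)}$ places a central element $z_{2j-1}$ in the \emph{odd} layer $2j-1$ for each $j$, and it identifies the top central element $c_{2^k+1}$ with $z_{2^k+1}=[w,x]=[y_0,y_{2^{k-1}}]$ by Lemma~\ref{lem:w-elements}. Since a surviving $c_m$ has nontrivial image in $W_k$ whereas $z_m\in\ker(G_k\to W_k)$ is central, the two are automatically independent modulo $\gamma_{m+1}(G_k)$ whenever both are nonzero.

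The crux is to show that the $2^{k-1}$ squares $z_3,z_5,\ldots,z_{2^k+1}=c_2^{\,2},c_3^{\,2},\ldots,c_{2^{k-1}+1}^{\,2}$ are all nonzero in $[H_k,H_k]$. Expanding $c_j^{\,2}$ in the basis $\{[y_0,y_i]\}$ by commutator collection, $c_j^{\,2}$ corresponds to the ``folded autocorrelation''
\[
\sum_{d\ge 1}\Big(\sum_l \binom{j-1}{l}\binom{j-1}{l+d}\Big)[y_0,y_d]
\]
reduced modulo the relation $[y_0,y_d]=[y_0,y_{2^k-d}]$, and the nonvanishing of these $\mathbb{F}_2$-coefficients is the genuinely computational heart of the argument; I expect this binomial analysis, mirroring the corresponding computation in \cite{KT}, to be the main obstacle. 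Granting it, the $z_{2j-1}$ sit at the strictly increasing odd weights $3<5<\cdots<2^k+1$, hence are linearly independent, and being $2^{k-1}$ in number they form a basis of $[H_k,H_k]$. Consequently the induced filtration on $[H_k,H_k]$ jumps only at odd weights: the even layers receive no central contribution, while each odd layer $2i+1$ with $3\le 2i+1\le 2^k-1$ receives exactly one. Combining this with the $c_m$-contribution yields $\gamma_{2i}(G_k)/\gamma_{2i+1}(G_k)\cong C_2$, then $\gamma_{2i+1}(G_k)/\gamma_{2i+2}(G_k)\cong C_2\times C_2$ for $i\ne 2^{k-1}$, and finally $\gamma_{2^k+1}(G_k)\cong C_2$ generated by $c_{2^k+1}$, exactly as stated. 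As a closing consistency check, summing these ranks together with $G_k/\gamma_2(G_k)$ recovers $\log_2\lvert G_k\rvert=2^k+2^{k-1}+k+2$ from Lemma~\ref{lem:order-Gk}, confirming that no factor has been over- or undercounted.
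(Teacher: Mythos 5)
Your overall architecture --- splitting each layer $\gamma_m(G_k)/\gamma_{m+1}(G_k)$ into a $W_k$-part carried by $c_m=[y,x,\overset{m-1}{\ldots},x]$ and a central part inside $[H_k,H_k]\le Z_k$, bounding ranks via Lemma~\ref{lem:gamma-m-modulo-gamma-m+1}, and closing with an order count against Lemma~\ref{lem:order-Gk} --- is sound and matches the skeleton of the paper's argument; the abelianisation and the $c_m$-direction are handled correctly. But there is a genuine gap at exactly the point you flag as the ``computational heart'', and it is not only that the computation is left undone (``Granting it\ldots''): the statement you propose to prove there is too weak to run your counting argument. Showing that $c_j^{\,2}$ is a nonzero vector in $[H_k,H_k]$ (or even that the $2^{k-1}$ vectors $c_2^{\,2},\ldots,c_{2^{k-1}+1}^{\,2}$ are linearly independent, hence a basis) only gives the containments $[H_k,H_k]\cap\gamma_{2j-1}(G_k)\supseteq\langle c_{j'}^{\,2}\mid j'\ge j\rangle$; it does \emph{not} show that $z_{2j-1}=c_j^{\,2}\cdot(\text{higher terms})$ lies \emph{outside} $\gamma_{2j}(G_k)$, which is what ``sits at weight $2j-1$'' means and what your independence-by-increasing-weights step silently assumes. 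Without that, the counting does not force the central drops to occur at the odd layers: for instance, a drop pattern with $[H_k,H_k]\cap\gamma_3=[H_k,H_k]\cap\gamma_4\supsetneq[H_k,H_k]\cap\gamma_5$ and one drop at every odd layer from $5$ onwards has the right total $2^{k-1}$ and satisfies all of your lower bounds, yet would make $\gamma_4(G_k)/\gamma_5(G_k)\cong C_2\times C_2$ and $\gamma_3/\gamma_4\cong C_2$, contradicting the proposition. Deciding whether $c_2^{\,2}=[y_0,y_1]$ lies in $\gamma_4(G_k)$ cannot be read off from its coordinates in the basis $\{[y_0,y_d]\}$ alone --- that is precisely the filtration you are trying to determine.

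The information that is actually needed, and that the paper supplies, is the complementary vanishing statement: for even $m$ the second generator $[y,x,\overset{m-2}{\ldots},x,y]$ of $\gamma_m(G_k)$ already lies in $\gamma_{m+1}(G_k)$, so the even layers have rank $\le 1$; the order count then forces every remaining bound to be attained. The paper proves this by the recursion $b_{j,m}\equiv b_{j+2,m}$ modulo $\gamma_{m+1}(G_k)$, reducing $b_{0,m}$ to $b_{j_0,m}$ with $j_0$ near $2^{k-1}$, followed by an explicit binomial-coefficient identity in the basis $e_i$ (split into the cases $m\equiv_4 0$ and $m\equiv_4 2$, and using $e_{2^{k-1}}=[w,x]\in\gamma_{2^k+1}(G_k)$). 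Your ``folded autocorrelation'' of $\binom{j-1}{\ell}$ is closely related to this identity, but to complete your route you would have to upgrade it to a statement locating each $z_m$ relative to the filtration --- at which point you are doing the paper's computation. So the proposal, as it stands, is incomplete in an essential way.
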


\begin{proof}
This is the same as for \cite[Lem. 6.1]{KT} up to the equation
  \begin{equation} \label{equ:recursion}
  b_{j,m} \equiv b_{j,m} b_{j,m+1}^{\, 2} b_{j,m+2} = b_{j+2,m} \quad
  \text{modulo $\gamma_{m+1}(G_k)$.}
  \end{equation}
 As in \cite{KT}, we now suppose that $m\le 2^{k}$ is even.
  From~\eqref{equ:recursion} we obtain inductively
  $[y,x,\overset{m-2}{\ldots},x,y] = b_{0,m} \equiv b_{j_0,m}$ modulo
  $\gamma_{m+1}(G_k)$ for
  \[
  j_0 =
  \begin{cases}
   2^{k-1} - \frac{m}{2} & \text{if $m \equiv_4
      0$,} \\
    2^{k-1} +1 - \frac{m}{2} & \text{if $m \equiv_4
      2$.}
  \end{cases}
  \]

  Consequently, it suffices to prove that
  $b_{j_0,m} \in \gamma_{m+1}(G_k)$.  We first assume
  $m \equiv_4 0$ so that $j_0 = 2^{k-1} - \frac{m}{2}$.  In this situation, we
  see, as in \cite{KT}, and the fact that the $e_i$ have exponent 2, that
  
  \begin{align*}
    b_{j_0,m} & = \prod\nolimits_{i=0}^{m/2} e_{j_0+i}^{\,
                 \binom{m-2}{i}} \prod\nolimits_{i=
                m/2+1}^{m-2} e_{2^k - (j_0+i)}^{\, 
                 \binom{m-2}{i}} \\
              & = \prod\nolimits_{i=0}^{m/2} e_{j_0+i}^{\,
                 \binom{m-2}{i}} \prod\nolimits_{i=
                m/2+1}^{m-2} e_{j_0 + (m-i)}^{\, 
                 \binom{m-2}{(m-i)-2}} \\
              & = \prod\nolimits_{i=0}^{m/2} e_{j_0+i}^{\,
                 \binom{m-2}{i}} \prod\nolimits_{i'=
                2}^{m/2-1} e_{j_0 + i'}^{\, 
                 \binom{m-2}{i'-2}} \\
              & = e_{j_0}e_{2^{k-1}}^{\,\binom{m-2}{\nicefrac{m}{2}}}\prod\nolimits_{i=2}^{m/2-1} e_{j_0+i}^{\,
                \big[ \binom{m-2}{i-2}-\binom{m-2}{i} \big]}\\
              & = e_{j_0}e_{2^{k-1}}^{\,\binom{m-2}{\nicefrac{m}{2}}}\prod\nolimits_{i=2}^{m/2-1} e_{j_0+i}^{\,       \binom{m}{i}}   \\
              & \equiv         e_{j_0} \prod\nolimits_{i=2}^{m/2-1} e_{j_0+i}^{\,       \binom{m}{i}} \qquad \text{mod }\gamma_{m+1}(G_k),                
  \end{align*}
  since $e_{2^{k-1}}=[w,x]\in \gamma_{2^k+1}(G_k)$, 
  and similarly
  \begin{align*}
    b_{j_0,m+1} & = \prod\nolimits_{i=0}^{m/2} e_{j_0+i}^{\,
                 \binom{m-1}{i}} \prod\nolimits_{i=
                m/2+1}^{m-1} e_{2^k - (j_0+i)}^{\, 
                 \binom{m-1}{i}} \\
              & = \prod\nolimits_{i=0}^{m/2} e_{j_0+i}^{\,
                \binom{m-1}{i}} \prod\nolimits_{i=
                m/2+1}^{m-1} e_{j_0 + (m-i)}^{\, 
                 \binom{m-1}{m-i-1}} \\
              & = \prod\nolimits_{i=0}^{m/2} e_{j_0+i}^{\,
                \binom{m-1}{i}} \prod\nolimits_{i'=
                1}^{m/2-1} e_{j_0 + i'}^{\, 
                 \binom{m-1}{i'-1}} \\
              & = e_{j_0}  e_{2^{k-1}}^{\,\binom{m-1}{\nicefrac{m}{2}}}\prod\nolimits_{i=1}^{m/2-1} e_{j_0+i}^{\,
                 \big[\binom{m-1}{i}+\binom{m-1}{i-1}\big]}\\
              & = e_{j_0}  e_{2^{k-1}}^{\,\binom{m-1}{\nicefrac{m}{2}}}
              \prod\nolimits_{i=2}^{m/2-1} e_{j_0+i}^{\,
                 \binom{m}{i}} \\
                 & \equiv         e_{j_0} \prod\nolimits_{i=2}^{m/2-1} e_{j_0+i}^{\,       \binom{m}{i}} \qquad \text{mod }\gamma_{m+1}(G_k).    
  \end{align*}
  Hence,  $b_{j_0,m} \equiv b_{j_0,m+1}$ modulo 
  $\gamma_{m+1}(G_k)$. Thus $b_{j_0,m} \in \gamma_{m+1}(G_k)$.
  
  Lastly, for  the case $m \equiv_4 2$ and  $j_0 = 2^{k-1}+1 - \frac{m}{2}$, we have
   \begin{align*}
    b_{j_0,m} & = \prod\nolimits_{i=0}^{m/2-1} e_{j_0+i}^{\,
                 \binom{m-2}{i}} \prod\nolimits_{i=
                m/2}^{m-2} e_{2^k - (j_0+i)}^{\, 
                 \binom{m-2}{i}} \\
              & = \prod\nolimits_{i=0}^{m/2-1} e_{j_0+i}^{\,
                 \binom{m-2}{i}} \prod\nolimits_{i=
                m/2}^{m-2} e_{j_0 + (m-2-i)}^{\, 
                 \binom{m-2}{m-2-i}} \\
              & = \prod\nolimits_{i=0}^{m/2-1} e_{j_0+i}^{\,
                 \binom{m-2}{i}} \prod\nolimits_{i'=
               0}^{m/2-2} e_{j_0 + i'}^{\, 
                 \binom{m-2}{i'}}\\
              & = e_{2^{k-1}}^{\,\binom{m-2}{\nicefrac{m}{2}-1}}\in\gamma_{m+1}(G_k).                
  \end{align*}
\end{proof}

\begin{corollary} \label{cor:gamma-central-part}
  For $2 \le m \le 2^k$ and
  $\nu(m) = \lceil 2^{k-1}-\frac{m}{2}+1 \rceil$, we have
  \[
    \gamma_m(G_k) \cap Z_k = \langle [y,x,\overset{2j-1}{\ldots},x,y]
    \mid \lfloor \nicefrac{m}{2} \rfloor \le j \le
    2^{k-1} \rangle \cong C_2^{\, \nu(m)}
  \]
  and
  $\gamma_m(G_k) \cap Z(G_k) = \langle [y,x,\overset{2^k}{\ldots},x]
  \rangle \times (\gamma_m(G_k) \cap Z_k) \cong C_2^{\, \nu(m)+1}$.
  In particular,
  \[
  [y,x,\overset{m-2}{\ldots},x,y] \in \langle [y,x,\overset{2j-1}{\ldots},x,y]
  \mid \nicefrac{m}{2} \le j \le 2^{k-1}
  \rangle \qquad \text{for $m \equiv_2 0$.}
  \]
\end{corollary}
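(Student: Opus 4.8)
The plan is to read off $\gamma_m(G_k)\cap Z_k$ directly from the layer-by-layer description of the lower central series in Proposition~\ref{pro:lower-central-Gk}, using the central extension $G_k/Z_k\cong W_k$ to decide which layer generators actually lie in $Z_k=\ker(G_k\to W_k)$. The guiding principle is a clean dichotomy: the \emph{single} commutators $c_\ell=[y,x,\overset{\ell-1}{\ldots},x]$ map onto nontrivial generators of the corresponding layers of $W_k$ and hence do \emph{not} lie in $Z_k$ (except at the very top of the series), whereas the \emph{doubled} commutators $z_{2j+1}=[y,x,\overset{2j-1}{\ldots},x,y]$ lie in $[H_k,H_k]\subseteq Z_k$ and have order $2$ by Lemma~\ref{lem:squared-commutators}; these are precisely the central contributions to the layers.

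For the inclusion $\supseteq$ I would note that each $z_{2j+1}$ with $j\ge\lfloor m/2\rfloor$ has weight $2j+1\ge m$, so lies in $\gamma_m(G_k)\cap Z_k$, whence the displayed subgroup is contained in $\gamma_m(G_k)\cap Z_k$. For the reverse inclusion together with the exact rank I would argue in two steps. First, \emph{independence}: a nontrivial relation $\prod_j z_{2j+1}^{\,\varepsilon_j}=1$, reduced modulo $\gamma_{2j_0+2}(G_k)$ for the minimal $j_0$ with $\varepsilon_{j_0}=1$, would force $z_{2j_0+1}\in\gamma_{2j_0+2}(G_k)$, contradicting that $z_{2j_0+1}$ is a nontrivial generator of the layer $\gamma_{2j_0+1}(G_k)/\gamma_{2j_0+2}(G_k)$ in Proposition~\ref{pro:lower-central-Gk}; so these $\nu(m)$ elements generate a copy of $C_2^{\,\nu(m)}$. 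Second, to see there is nothing more, I would compare orders in the exact sequence $1\to\gamma_m(G_k)\cap Z_k\to\gamma_m(G_k)\to\gamma_m(W_k)\to 1$: summing the layer ranks of $G_k$ from Proposition~\ref{pro:lower-central-Gk} against the layer ranks of $W_k$ (all equal to $1$, by \cite[Prop.~2.6]{KT}) leaves one surplus rank at each odd level $\ell$ with $m\le\ell\le 2^k-1$, plus one further rank from the top layer $\gamma_{2^k+1}(G_k)$; these total exactly $\nu(m)$, matching the number of independent $z_{2j+1}$ and forcing equality.

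For $\gamma_m(G_k)\cap Z(G_k)$ I would first locate $Z(G_k)$ relative to $Z_k$ via the extension, using $Z(W_k)=\langle\bar w\rangle\cong C_2$: the additional central contribution is carried by the top single commutator $[y,x,\overset{2^k}{\ldots},x]$, which supplies the extra factor and the rank $\nu(m)+1$, exactly in parallel with the odd-prime case of \cite{KT}. The concluding ``in particular'' is then immediate: for even $m$ the element $[y,x,\overset{m-2}{\ldots},x,y]$ has weight $m$ and lies in $[H_k,H_k]\cap\gamma_m(G_k)\subseteq\gamma_m(G_k)\cap Z_k$, and since $\lfloor m/2\rfloor=m/2$ it is automatically a product of the generators $z_{2j+1}$ with $m/2\le j\le 2^{k-1}$.

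I expect the main obstacle to be the second step of the reverse inclusion, namely controlling the image of the single commutators $c_\ell$ in the lower central series of $W_k$: one needs that $c_\ell$ is a nontrivial generator of $\gamma_\ell(W_k)/\gamma_{\ell+1}(W_k)$ for $2\le\ell\le 2^k$, so that a $Z_k$-element can carry no $c_\ell$-component. The order-counting route sidesteps this element-chasing but still rests on the exact layer ranks of both $G_k$ and $W_k$; the folding identity~\eqref{eq:note}, together with the Lucas-type evaluation of binomial coefficients already exploited in the proof of Proposition~\ref{pro:lower-central-Gk}, is what ultimately makes the doubled commutators $z_{2j+1}$ reduce correctly to the standard generators $[y_0,y_i]$, and hence underlies both the independence argument and the identification of the top commutator.
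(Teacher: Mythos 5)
Your argument for the first assertion is sound and is genuinely more informative than the paper's proof, which simply defers to \cite[Proof of Cor.~6.2]{KT}: the containment by weight, the independence of the elements $[y,x,\overset{2j-1}{\ldots},x,y]$ read off from the layer ranks in Proposition~\ref{pro:lower-central-Gk}, and the order count through the extension $1 \to Z_k \to G_k \to W_k \to 1$ (with every layer of $W_k$ of rank one) do combine to give exactly $\nu(m)$ surplus ranks, namely one for each odd level in $[m,2^k-1]$ plus one from $\gamma_{2^k+1}(G_k)$. The deduction of the ``in particular'' clause from the first display is also correct.

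The gap is in your treatment of $\gamma_m(G_k) \cap Z(G_k)$, and it sits precisely at the point where $p=2$ diverges from the odd-prime case of \cite{KT}, which you invoke as a black box. For $p=2$ the element $w$ is \emph{not} central: Lemma~\ref{lem:w-elements} gives $[w,x]=[y_0,y_{2^{k-1}}]\neq 1$, so no coset $wZ_k$ meets $Z(G_k)$ and in fact $Z(G_k)=Z_k$ (since $Z(G_k)$ maps into $Z(W_k)=\langle \bar{w}\rangle$). Moreover the ``top single commutator'' you propose as the extra direct factor satisfies $[y,x,\overset{2^k}{\ldots},x]=[y,x,\overset{2^k-1}{\ldots},x,y]$ (this identity is used explicitly in the proof of Proposition~\ref{pro:Frattini-Gk}, and is forced by the $i=2^{k-1}$ case of Proposition~\ref{pro:lower-central-Gk}); that element is exactly the $j=2^{k-1}$ generator already present in $\gamma_m(G_k)\cap Z_k$. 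So the product you write down is not direct and cannot produce rank $\nu(m)+1$; your claim that this step goes through ``exactly in parallel with the odd-prime case'' is the missing (and, as stated, unobtainable) ingredient. In the odd case the folding relation $[y_0,y_{p^k-i}]=[y_0,y_i]^{-1}$ makes $[w,x]$ collapse to $1$, so $w$ \emph{is} central and the extra factor genuinely appears; for $p=2$ the middle term $[y_0,y_{2^{k-1}}]$ survives. You should either prove the second assertion in a form adapted to $p=2$ (which, with $Z(G_k)=Z_k$, reduces it to the first assertion) or flag that the statement as printed needs the same adaptation; as it stands your proof of that clause does not work.
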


\begin{proof}
 This follows as in \cite[Proof of Cor.~6.2]{KT}.
\end{proof}

\begin{corollary} \label{cor:lower-p-central-Gk} The lower $2$-series
  of $G_k$ has length $2^k+1$ and satisfies:
  \begin{align*}
    & G_k  = P_1(G_k)  = \langle x,y \rangle \; P_2(G_k) %
    &  \text{with} \quad %
    & G_k/P_2(G_k)  \cong C_2 \times C_2, \\
    & P_2(G_k) = \langle x^2, y^2, [y,x] \rangle \; P_3(G_k) %
    & \text{with} \quad %
    & P_2(G_k)/P_3(G_k)  \cong C_2 \times C_2 \times C_2,
 \end{align*}
 and, for $3 \le i \le 2^k+1$, the $i$th term is
 $P_i(G_k) = \langle x^{2^{i-1}} \rangle \gamma_i(G_k)$ so that
 \[
 P_i(G_k) =
 \begin{cases}
   \langle x^{2^{i-1}}, [y,x,\overset{i-1}{\ldots},x] \rangle
   \; P_{i+1}(G_k), & \text{if $i \equiv_2 0$ and $i \le k+1$,} \\
   \langle x^{2^{i-1}}, [y,x,\overset{i-1}{\ldots},x],
   [y,x,\overset{i-2}{\ldots},x,y] \rangle \; P_{i+1}(G_k) & \text{if
     $i \equiv_2 1$ and $i \le k+1$,} \\
   \langle [y,x,\overset{i-1}{\ldots},x] \rangle
   \; P_{i+1}(G_k), & \text{if $i \equiv_2 0$ and $k+1<i\le 2^k$,} \\
   \langle [y,x,\overset{i-1}{\ldots},x],
   [y,x,\overset{i-2}{\ldots},x,y] \rangle \; P_{i+1}(G_k) & \text{if
     $i \equiv_2 1$ and $k+1<i\le 2^k$} \\
    \langle [y,x,\overset{i-1}{\ldots},x] \rangle \; P_{i+1}(G_k) & \text{if
     $i=2^k+1$} 
 \end{cases}
 \]
 with
 \[
 P_i(G_k)/P_{i+1}(G_k) \cong 
 \begin{cases}
   C_2 \times C_2 & \text{if $i \equiv_2 0$ and $i \le k+1$,} \\
   C_2 \times C_2 \times C_2 & \text{if $i \equiv_2 1$ and
     $i \le k+1$,} \\
   C_2 & \text{if $i \equiv_2 0$ and $k+1<i\le 2^k$,} \\
   C_2 \times C_2 & \text{if $i \equiv_2 1$ and $k+1<i\le 2^k$} \\
    C_2  & \text{if $i=2^k+1$}.
 \end{cases}
 \]
\end{corollary}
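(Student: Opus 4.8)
The plan is to prove the asserted description by induction on $i$, using the defining recursion $P_{i+1}(G_k) = [P_i(G_k),G_k]\,P_i(G_k)^{\,2}$ of the lower $2$-series together with the lower central series computed in Proposition~\ref{pro:lower-central-Gk}. First I would dispose of the low-index terms directly. Since $G_k = \langle x,y\rangle$ is $2$-generated, $G_k/P_2(G_k) = G_k/\Phi(G_k) \cong C_2 \times C_2$; from $P_2(G_k) = \gamma_2(G_k)\,G_k^{\,2} = \langle x^2,y^2,[y,x]\rangle\,\gamma_3(G_k)$ (using Lemma~\ref{lem:contained} for $G_k^{\,2} \subseteq \langle x^2,y^2\rangle\gamma_2(G_k)$) one computes $P_3(G_k) = [P_2(G_k),G_k]\,P_2(G_k)^{\,2}$ and checks, using the relations $y^4 = 1$ and $[y^2,x]=1$, that $x^2,y^2,[y,x]$ are independent elements of order $2$ modulo $P_3(G_k)$, so that $P_2(G_k)/P_3(G_k) \cong C_2 \times C_2 \times C_2$ and $P_3(G_k) = \langle x^4\rangle\,\gamma_3(G_k)$.

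For the inductive step I would assume $P_i(G_k) = \langle x^{2^{i-1}}\rangle\,\gamma_i(G_k)$ for some $i$ with $3 \le i \le 2^k$ and prove $P_{i+1}(G_k) = \langle x^{2^i}\rangle\,\gamma_{i+1}(G_k)$. The inclusion $\supseteq$ is immediate, since $\gamma_{i+1}(G_k) = [\gamma_i(G_k),G_k] \le [P_i(G_k),G_k]$ and $x^{2^i} = (x^{2^{i-1}})^2 \in P_i(G_k)^{\,2}$. For the reverse inclusion I would treat the two factors of the recursion separately. Because $\gamma_i(G_k)$ is central modulo $\gamma_{i+1}(G_k)$ and, by Lemma~\ref{lem:gamma-m-modulo-gamma-m+1}, every quotient $\gamma_m(G_k)/\gamma_{m+1}(G_k)$ has exponent $2$, squaring a general element $x^{2^{i-1}m}g$ with $g \in \gamma_i(G_k)$ gives $x^{2^im}$ modulo $\gamma_{i+1}(G_k)$; hence $P_i(G_k)^{\,2} \subseteq \langle x^{2^i}\rangle\,\gamma_{i+1}(G_k)$. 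For the commutator factor, $[\gamma_i(G_k),G_k] = \gamma_{i+1}(G_k)$ and $[x^{2^{i-1}},x]=1$, so only $[x^{2^{i-1}},y]$ remains to be controlled, and establishing $[x^{2^{i-1}},y] \in \gamma_{i+1}(G_k)$ then yields $[P_i(G_k),G_k] \le \gamma_{i+1}(G_k)$ and completes the induction.

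The key technical point, and the step I expect to be the main obstacle, is precisely the claim $[x^{2^{i-1}},y] \in \gamma_{i+1}(G_k)$. I would expand $[x^{2^{i-1}},y]$ via the collection formula \eqref{equ:commutator-formula-2} of Proposition~\ref{pro:standard-commutator-id} with $r = i-1$. The leading term $[x,y]^{2^{i-1}}$ vanishes for $i \ge 3$ because $[x,y]$ has order $4$ by Lemma~\ref{lem:gamma-2-exp-4}; the final term has weight $2^{i-1}+1 \ge i+1$ and so lies in $\gamma_{i+1}(G_k)$; and each intermediate term $[x,y,x,\overset{j}{\ldots},x]^{\binom{2^{i-1}}{j+1}}$ is a weight-$(j+2)$ commutator raised to a power of $2$-adic valuation $(i-1)-v_2(j+1)$, so by Lemma~\ref{lem:gamma-m-modulo-gamma-m+1} it sits in $\gamma_{(j+2)+(i-1)-v_2(j+1)}(G_k) \subseteq \gamma_{i+1}(G_k)$, the inclusion holding because $j \ge v_2(j+1)$. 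One must also check that the correction terms generating the group $K(x,[x,y])$ lie in $\gamma_{i+1}(G_k)$, which follows from the same valuation bookkeeping.

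Finally, the explicit generators and the isomorphism types of the quotients are read off by combining the now-established identity $P_i(G_k) = \langle x^{2^{i-1}}\rangle\,\gamma_i(G_k)$ with Proposition~\ref{pro:lower-central-Gk}: since $x$ has order $2^{k+1}$ in $G_k$, the element $x^{2^{i-1}}$ is nontrivial exactly when $i \le k+1$, in which range it contributes an independent extra $C_2$ to $P_i(G_k)/P_{i+1}(G_k)$ on top of $\gamma_i(G_k)/\gamma_{i+1}(G_k)$, whereas for $i > k+1$ the two quotients agree; matching this against the even/odd rank pattern of Proposition~\ref{pro:lower-central-Gk} gives the five cases, and the length $2^k+1$ is inherited from the nilpotency class, since $P_{2^k+1}(G_k) = \gamma_{2^k+1}(G_k) \neq 1$ while $P_{2^k+2}(G_k) = 1$.
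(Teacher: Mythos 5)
Your proposal follows essentially the same route as the paper's proof: the same reduction, via the recursion for $P_{i+1}(G_k)$ and Proposition~\ref{pro:lower-central-Gk}, to showing that $x^{2^{i-1}}$ is central modulo $\gamma_{i+1}(G_k)$, established by expanding $[x^{2^{i-1}},y]$ with the collection formula \eqref{equ:commutator-formula-2}. The only divergence is cosmetic --- the paper kills the intermediate terms using $4\mid\binom{2^{i-1}}{r}$ together with $\gamma_2(G_k)$ having exponent $4$, while you use the depth gained by squaring through the elementary abelian lower central quotients; both work, though note that the type-(ii) generators of $K(x,[x,y])$ vanish because commutators of weight at least $2$ in $[x,y]$ lie in the central exponent-$2$ subgroup $[H_k,H_k]$, not by valuation bookkeeping alone.
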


\begin{proof}
  The descriptions of $G_k / P_2(G_k)$ and $P_2(G_k) / P_3(G_k)$ are
  straightforward.  Let $i \ge 3$.  Clearly,
  $P_i(G_k) \supseteq \langle x^{2^{i-1}} \rangle \gamma_i(G_k)$.  In
  view of Proposition~\ref{pro:lower-central-Gk}, it suffices to prove
  that $x^{2^{i-1}}$ is central modulo~$\gamma_{i+1}(G_k)$.  Indeed,
  from 
  Proposition~\ref{pro:standard-commutator-id}  we
  obtain
  \[
  [x^{2^{i-1}},y] \equiv [x,y]^{2^{i-1}} [x,y,x]^{\binom{2^{i-1}}{2}} \,
               \cdots \, [x, y, x, \overset{i-2} \ldots, x]^{\binom{2^{i-1}}{i-1}}
                              \quad \text{modulo }
  \gamma_{i+1}(G_k). 
  \]
  We have $4 \mid \binom{2^{i-1}}{r}$ for $2\le r \le i-1$ when $i\ge 4$, and the result follows since the exponent of $\gamma_2(G_k)$ is 4. For $i=3$, 
  \[
  [x^{4},y] \equiv [x,y]^{4} [x,y,x]^{\binom{4}{2}} \equiv 1
                              \quad \text{modulo }
  \gamma_{4}(G_k),
  \]
  as $\gamma_3(G_k)/\gamma_4(G_k)$ has exponent 2.
\end{proof}

\begin{corollary}\label{cor:dim-subgroup}
  The dimension subgroup series of $G_k$ has length~$2^k+2$.  For
  $1\le i \le 2^k+2$, the $i$th term is
  $D_i(G_k) =G_k^{\, 2^{l(i)}} \gamma_{\lceil \nicefrac{i}{2}\rceil}(G_k)^2\gamma_i(G_k)$, where
  $l(i) = \lceil\log_2 i \rceil$.

  Furthermore, if $i$ is not a power of $2$, equivalently if $l(i+1)=l(i)$, then $D_i(G_k)/D_{i+1}(G_k) \cong \gamma_{\lceil \nicefrac{i}{2}\rceil}(G_k)^2\gamma_i(G_k) /\gamma_{\lceil \nicefrac{i+1}{2}\rceil}(G_k)^2 \gamma_{i+1}(G_k)$  so that 
   \[
  D_i(G_k) =
  \begin{cases}
      \langle [y,x,\overset{i-1}{\ldots},x] \rangle D_{i+1}(G_k) & \text{if $i\equiv_2 1$,} \\
      \langle [y,x,\overset{i-3}{\ldots},x,y], [y,x,\overset{i-1}\ldots,x] \rangle  D_{i+1}(G_k) &
    \text{if $i\equiv_2 0$,}
  \end{cases}
  \]
  with
  \[
  D_i(G_k)/D_{i+1}(G_k) \cong
 \begin{cases}
                              C_2 &
                              \text{if $i\equiv_2 1$   and $i<2^k$,}\\
                            C_2\times C_2 &
                              \text{if $i\equiv_2 0$  and $i<2^k$,}\\
                               1 &
                              \text{if $i=2^k+1$,}\\
                            C_2 &
                              \text{if $i=2^k+2$,}\\
  \end{cases}
  \]
 whereas if $i = 2^l$ is a power of~$2$, equivalently if
  $ l(i+1) = l + 1$ for $l = l(i)$, then 
  $D_i(G_k)/D_{i+1}(G_k) \cong \langle x^{2^l} \rangle / \langle
  x^{2^{l+1}} \rangle \times \langle y^{2^l} \rangle / \langle
  y^{2^{l+1}} \rangle \times \langle [y,x,\overset{i-3}\ldots,x,y]\rangle\gamma_i(G_k) / \gamma_{i+1}(G_k)$ so that
  \begin{align*}
    & D_1(G_k) = \langle x,y \rangle D_2(G_k), \\
    & D_2(G_k) = \langle
      x^2,y^2, [y,x] \rangle D_{3}(G_k), \\
    & D_i(G_k) = \langle x^{2^l}, [y,x,\overset{i-3}\ldots,x,y], [y,x,\overset{i-1}{\ldots},x] \rangle D_{i+1}(G_k)
  \end{align*}
  with
  \[
  D_i(G_k)/D_{i+1}(G_k)\cong
  \begin{cases}
    C_2 \times C_2  & \text{if $i=1$, equivalently if $l = 0$,} \\
    C_2 \times C_2 \times  C_2  & \text{if $i=2$,
      equivalently if $l = 1$,} \\
    C_2 \times C_2  \times C_2 & \text{if $i = 2^l$ with $2 \le l \le k$.}
  \end{cases}
  \]
  
  In particular, for $2^{k-1}+1 \le i \le 2^k$ and thus $l(i)=k$,
  \[
  D_i(G_k) = G_k^{\, 2^k} \gamma_i(G_k) = \langle x^{2^k}, [y,x, \overset{2^k-3}{\ldots}, x,y] \rangle \;
  \gamma_i(G_k),
  \]
  so that 
  \[
  \log_2\lvert D_i(G_k) \rvert =\begin{cases}
        \log_2\lvert \gamma_i(G_k) \rvert +1  & \text{if $i\ne 2^k$,} \\
     \log_2\lvert \gamma_i(G_k) \rvert +2 & \text{if $i = 2^k$.}
  \end{cases}
    \]
\end{corollary}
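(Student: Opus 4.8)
The plan is to start from the Jennings--Lazard description of the modular dimension subgroups of a finite $2$-group, $D_i(G)=\prod_{j2^l\ge i}\gamma_j(G)^{2^l}$, and to collapse this product to the three displayed factors. The decisive inputs are Lemma~\ref{lem:gamma-2-exp-4}, which gives $\gamma_2(G_k)^4=1$ and hence $\gamma_j(G_k)^{2^l}=1$ for all $j\ge 2$ and $l\ge 2$, together with the monotonicity $G_k^{2^l}\supseteq G_k^{2^{l+1}}$. The first observation kills every term with $j\ge 2$ and $l\ge 2$, leaving only the contributions with $l=0$ (namely $\gamma_j$ for $j\ge i$, whose product is $\gamma_i(G_k)$) and with $l=1$ (namely $\gamma_j^2$ for $2j\ge i$, whose product is $\gamma_{\lceil i/2\rceil}(G_k)^2$); the second reduces the whole $j=1$ family to its largest member $G_k^{2^{l(i)}}$ with $l(i)=\lceil\log_2 i\rceil$. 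Assembling these yields $D_i(G_k)=G_k^{2^{l(i)}}\,\gamma_{\lceil i/2\rceil}(G_k)^2\,\gamma_i(G_k)$.

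For the successive quotients I would split according to whether $i$ is a power of $2$. If $i$ is not a power of $2$ then $l(i+1)=l(i)$, the $2$-power factor cancels, and $D_i/D_{i+1}\cong\gamma_{\lceil i/2\rceil}^2\gamma_i/\gamma_{\lceil(i+1)/2\rceil}^2\gamma_{i+1}$. Here the key tool is Lemma~\ref{lem:squared-commutators}, which gives $\gamma_m(G_k)^2\subseteq\gamma_{2m-1}(G_k)$ and $[y,x,\overset{m-1}\ldots,x]^2\equiv[y,x,\overset{2m-3}\ldots,x,y]$ modulo $\gamma_{2m}(G_k)$; combined with the layer structure of Proposition~\ref{pro:lower-central-Gk} this lets me read off the surviving generators. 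For odd $i$ the two squared factors coincide, the central element they produce is absorbed, and only $[y,x,\overset{i-1}\ldots,x]$ survives, giving $C_2$; for even $i$ the factor $\gamma_{i/2}^2$ contributes the extra central generator $[y,x,\overset{i-3}\ldots,x,y]$ alongside $[y,x,\overset{i-1}\ldots,x]$, giving $C_2\times C_2$.

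If $i=2^l$ is a power of $2$ then $l(i+1)=l+1$, so the extra quotient $G_k^{2^l}/G_k^{2^{l+1}}$ enters. I would use Lemma~\ref{lem:contained} to control $G_k^{2^l}$ modulo $G_k^{2^{l+1}}\gamma_{i+1}(G_k)$, extracting the independent contributions $x^{2^l}$, $y^{2^l}$ and the central element $[y,x,\overset{i-3}\ldots,x,y]$, and then combine with $\gamma_i/\gamma_{i+1}=\langle[y,x,\overset{i-1}\ldots,x]\rangle$. Since $y^4=1$ kills $y^{2^l}$ for $l\ge 2$, this produces the three regimes $l=0$, $l=1$, $2\le l\le k$ with quotients $C_2\times C_2$, $C_2^{\,3}$, $C_2^{\,3}$. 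The length and top are then settled directly from the formula: $G_k^{2^{k+1}}=1$ forces $D_{2^k+3}=1$, while $\gamma_{2^{k-1}+1}(G_k)^2=\gamma_{2^k+1}(G_k)=\langle[w,x]\rangle\cong C_2$ (using Lemma~\ref{lem:squared-commutators} and Corollary~\ref{cor:gamma-central-part}) shows $D_{2^k+2}\ne 1$, so the length is $2^k+2$; the same computation makes $D_{2^k+1}/D_{2^k+2}$ trivial.

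Finally, for $2^{k-1}+1\le i\le 2^k$ one has $l(i)=k$, and I would identify $G_k^{2^k}$ via Proposition~\ref{pro:index-Gk-Gk-hoch-p} and Lemma~\ref{lem:contained}, then evaluate $D_i=G_k^{2^k}\gamma_{\lceil i/2\rceil}^2\gamma_i$ and read off $\log_2\lvert D_i\rvert$ from the layer orders already recorded. I expect the main obstacle to be precisely the bookkeeping of the squared factor $\gamma_{\lceil i/2\rceil}(G_k)^2$: one must track exactly which central generator $[y,x,\overset{2\lceil i/2\rceil-3}\ldots,x,y]$ it produces and decide whether that generator is absorbed into $G_k^{2^k}\gamma_i$ (as it is when $\lceil i/2\rceil=2^{k-1}$, where it coincides with the generator $w$ of $G_k^{2^k}$) or remains independent. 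Ensuring that no central square is double-counted and none is silently dropped is where Lemma~\ref{lem:squared-commutators}, Corollary~\ref{cor:gamma-central-part} and the explicit description of $w$ must be used together, and it is the only genuinely delicate point in an otherwise mechanical case analysis.
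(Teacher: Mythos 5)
Your proposal follows essentially the same route as the paper's proof: both start from the Jennings--Lazard formula of \cite[Thm.~11.2]{DDMS99}, collapse the product to $G_k^{\,2^{l(i)}}\gamma_{\lceil \nicefrac{i}{2}\rceil}(G_k)^2\gamma_i(G_k)$ using the exponent-$4$ bound on $\gamma_2(G_k)$ together with Lemma~\ref{lem:squared-commutators}, and then split the computation of $D_i(G_k)/D_{i+1}(G_k)$ according to whether $i$ is a power of~$2$, reading off the layers from Proposition~\ref{pro:lower-central-Gk}, Corollary~\ref{cor:gamma-central-part} and the containment $G_k^{\,2^l}\cap\gamma_i(G_k)\subseteq\langle [y,x,\overset{2^l-3}\ldots,x,y]\rangle\gamma_{2^l}(G_k)$ from Lemma~\ref{lem:contained}. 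The only cosmetic difference is in the case $i=2^l$, where the paper first computes the quotient modulo $H_k$ and then compares overall orders, whereas you extract the independent generators $x^{2^l}$, $y^{2^l}$, $[y,x,\overset{i-3}\ldots,x,y]$ directly; this is the same argument in different packaging.
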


\begin{proof}
  For $i \in \mathbb{N}$ write $l(i) = \lceil\log_2 i \rceil$. From
  \cite[Thm.~11.2]{DDMS99}, Lemma~\ref{lem:squared-commutators} and the fact that $\gamma_2(G_k)$ has exponent 4, we obtain
  \begin{align*}
  D_i(G_k) &= G_k^{\, 2^{l(i)}} \gamma_{\lceil \nicefrac{i}{2}\rceil}(G_k)^2\gamma_i(G_k)\\
  &=\begin{cases}
   G_k^{\, 2^{l(i)}} \gamma_i(G_k)  & \text{if $i\equiv_2 1$,} \\
   G_k^{\, 2^{l(i)}}\langle [y,x,\overset{i-3}\ldots,x,y]\rangle\gamma_i(G_k)  & \text{if $i\equiv_2 0$.} 
  \end{cases}
  \end{align*}
  In particular,
  $D_i(G_k) = 1$ for $i > 2^k+3$, by
  Proposition~\ref{pro:lower-central-Gk} and
  Corollary~\ref{cor:lower-p-central-Gk}.

  Now suppose that $1\le i \le 2^k+2$ and put $l = l(i)$.  As in
  Lemma~\ref{lem:contained} we observe that
  $G_k^{\, 2^l} \cap \gamma_i(G_k) \subseteq \langle [y,x,\overset{2^l-3}\ldots,x,y]\rangle \gamma_{2^l}(G_k)$.  If
  $l(i+1) = l$, then  
  \begin{align*}
   & D_i(G_k)/D_{i+1}(G_k) \\
   & = \begin{cases}
    									G_k^{\, 2^l}\gamma_i(G_k)/G_k^{\, 2^l}  \langle [y,x,\overset{i-2}\ldots,x,y]\rangle\gamma_{i+1}(G_k) & \text{if $i\equiv_2 1$,}\\
    									G_k^{\, 2^l}\langle [y,x,\overset{i-3}\ldots,x,y]\rangle\gamma_i(G_k)/G_k^{\, 2^l}  \gamma_{i+1}(G_k) & \text{if $i\equiv_2 0$,}
    									\end{cases}\\
                          & \cong
                          \begin{cases}
                            \gamma_i(G_k) / (G_k^{\, 2^l} \cap
                            \gamma_i(G_k)) \langle [y,x,\overset{i-2}\ldots,x,y]\rangle\gamma_{i+1}(G_k)& \text{if $i\equiv_2 1$,}\\
                           \langle [y,x,\overset{i-3}\ldots,x,y]\rangle \gamma_i(G_k) / (G_k^{\, 2^l} \cap
                            \langle [y,x,\overset{i-3}\ldots,x,y]\rangle\gamma_i(G_k)) \gamma_{i+1}(G_k)& \text{if $i\equiv_2 0$,}
                            \end{cases}\\
                              &\cong 
                              \begin{cases}
                              \gamma_i(G_l) / \langle [y,x,\overset{i-2}\ldots,x,y]\rangle\gamma_{i+1}(G_l) &
                              \text{if $i\equiv_2 1$,}\\
                             \langle [y,x,\overset{i-3}\ldots,x,y]\rangle \gamma_i(G_l) / \gamma_{i+1}(G_l) &
                              \text{if $i\equiv_2 0$,}
                              \end{cases}\\
                              &\cong 
                              \begin{cases}
                              C_2 &
                              \text{if $i\equiv_2 1$   and $i<2^k$,}\\
                            C_2\times C_2 &
                              \text{if $i\equiv_2 0$  and $i<2^k$,}\\
                               1 &
                              \text{if $i=2^k+1$,}\\
                            C_2 &
                              \text{if $i=2^k+2$.}\\
                              \end{cases}
  \end{align*}

  Now suppose that $l(i+1) = l+1$, equivalently $i = 2^l$.  We observe
  that, modulo $H_k$, the $i$th factor of the dimension subgroup
 series is
  \[
  D_i(G_k) H_k / D_{i+1}(G_k) H_k = \langle x^{2^l} \rangle H_k /
  \langle x^{2^{l+1}} \rangle H_k \cong C_2.
  \]
  Comparing with the overall order of~$G_k$, conveniently implicit in
  Corollary~\ref{cor:lower-p-central-Gk}, we deduce that
  \begin{align*} 
    D_i(G_k)/D_{i+1}(G_k) & = G_k^{\, 2^l}\gamma_i(G_k) / G_k^{\, 2^{l+1}}
                            \gamma_{i+1}(G_k) \\
                          & = \langle x^{2^l}, y^{2^l}, [y,x,\overset{i-3}\ldots,x,y]\rangle \gamma_i(G_l) / \langle
                            x^{2^{l+1}}, y^{2^{l+1}} \rangle \gamma_{i+1}(G_l) \\
                          & \cong \langle x^{2^l} \rangle / \langle
                            x^{2^{l+1}} \rangle \times \langle y^{2^l} \rangle / \langle
                            y^{2^{l+1}} \rangle \times \langle [y,x,\overset{i-3}\ldots,x,y]\rangle\gamma_i(G_l) / \gamma_{i+1}(G_l).
  \end{align*}
  All remaining assertions follow readily from
  Proposition~\ref{pro:lower-central-Gk}.
  \end{proof}

\begin{proposition}\label{pro:Frattini-Gk}
  The Frattini series of $G_k$ has length $k+1$ and satisfies:
  \begin{align*}
    & G_k=\Phi_0(G_k)=\langle x,y\rangle \Phi_1(G_k) \quad
      \text{with}\quad G_k/\Phi_1(G_k) \cong C_2 \times C_2, \\
    & \Phi_1(G_k) = \langle x^2,y^2, [y,x], [y,x,x] \rangle\Phi_2(G_k) \\
    & \qquad \text{with}\quad  \Phi_1(G_k)/\Phi_2(G_k) \cong C_2^{\, 4}, 
  \end{align*}
  and, for $2\le i< k$, the $i$th term is
  \begin{align*}
    & \Phi_i(G_k)=\langle x^{2^i},[y,x,
      \overset{\nu(i)}{\ldots}, x], [y,x,
      \overset{\nu(i)+1}{\ldots}, x], \ldots, [y,x,
      \overset{\nu(i+1)-1}{\ldots},
      x], \\
    & \qquad  [y,x, \overset{2\nu(i-1)-1}{\ldots}, x,y], [y,x,
      \overset{2\nu(i-1)+1}{\ldots}, x,y],
      \ldots, [y,x, \overset{2\nu(i)-3}{\ldots},
      x,y]\rangle \Phi_{i+1}(G_k) \\
    & \quad \text{with} \quad \Phi_i(G_k)/\Phi_{i+1}(G_k) \cong
              C_2^{\, 2^i+2^{i-1}+1} 
             \end{align*}
  where $\nu(j) = 2^j-1$ for $1 \le j \le k$, and
  \begin{align*}
  & \Phi_{k}(G_k)=\langle x^{2^k},[y,x,
      \overset{2^k-1}{\ldots}, x], [y,x,
      \overset{2^k}{\ldots}, x],   [y,x, \overset{2^k-3}{\ldots}, x,y]\rangle \Phi_{k+1}(G_k)\\
 & \quad \text{with} \quad \Phi_k(G_k)/\Phi_{k+1}(G_k) \cong
              C_2^{\,4}.
      \end{align*}    
     \end{proposition}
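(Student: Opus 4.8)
The plan is to prove the description by induction on $i$, computing each Frattini factor from the previous one via $\Phi_{i+1}(G_k) = \Phi_i(G_k)^2 \, [\Phi_i(G_k), \Phi_i(G_k)]$. Throughout I would work modulo $[\Phi_i(G_k),\Phi_i(G_k)]$, so that in the abelian quotient $\Phi_i(G_k)/[\Phi_i(G_k),\Phi_i(G_k)]$ the image of $\Phi_i(G_k)^2$ is generated by the squares of the given generators, and I would exploit three facts established above: the lower central series structure of Proposition~\ref{pro:lower-central-Gk}, the squaring rule $[y,x,\overset{r}{\ldots},x]^2 \equiv [y,x,\overset{2r-1}{\ldots},x,y]$ modulo $\gamma_{2r+2}(G_k)$ together with the centrality and order-$2$ property of the right-normed commutators from Lemma~\ref{lem:squared-commutators}, and the collection formula~\eqref{equ:commutator-formula-2}. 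The base cases $i=0,1$ are handled directly: here $G_k^{\,2} \subseteq \langle x^2, y^2\rangle \gamma_2(G_k)$ (as in Lemma~\ref{lem:contained}) forces $\Phi_0(G_k)/\Phi_1(G_k) \cong C_2 \times C_2$, and $\Phi_1(G_k) = G_k^{\,2}\gamma_2(G_k)$ is seen to have the stated four generators $x^2, y^2, [y,x], [y,x,x]$ modulo $\Phi_2(G_k)$.

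The engine of the induction is that squaring and commutation feed the two families of generators separately. Writing $c_w = [y,x,\overset{w-1}{\ldots},x]$ for the left-normed generator of weight $w$, squaring the generators of $\Phi_i(G_k)$ produces $(x^{2^i})^2 = x^{2^{i+1}}$ and the squares $c_w^2 \equiv [y,x,\overset{2w-3}{\ldots},x,y]$, which by Lemma~\ref{lem:squared-commutators} are exactly the right-normed generators of $\Phi_{i+1}(G_k)$, while the right-normed generators square to $1$; thus $\Phi_i(G_k)^2$ contributes precisely $x^{2^{i+1}}$ and the right-normed family of $\Phi_{i+1}(G_k)$. On the other hand, applying~\eqref{equ:commutator-formula-2} to $[x^{2^i}, c_w]$ and using that $\binom{2^i}{j}$ is odd if and only if $j \in \{0, 2^i\}$, I find that the unique surviving term with odd exponent is the fully collected commutator $[c_w, x, \overset{2^i}{\ldots}, x] = c_{w+2^i}$, a left-normed generator of weight $w + 2^i$; the remaining terms carry exponents divisible by $2$ and so land in the right-normed family (when the exponent is $\equiv 2 \bmod 4$) or vanish (when it is $\equiv 0 \bmod 4$, as $\gamma_2(G_k)$ has exponent $4$ by Lemma~\ref{lem:gamma-2-exp-4}). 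As $w$ runs from $2^i$ to $2^{i+1}-1$, the weights $w + 2^i$ run from $2^{i+1}$ to $2^{i+2}-1$, which are exactly the left-normed generators of $\Phi_{i+1}(G_k)$; the commutators among the $c_w$ and those of $x^{2^i}$ with the central right-normed generators contribute nothing new, by the relation~\eqref{equ:comm-ci-cj} in the proof of Lemma~\ref{lem:squared-commutators}. Reading the same collection identities in both directions then yields $\Phi_{i+1}(G_k) = \langle x^{2^{i+1}}, \text{left- and right-normed families}\rangle$ as claimed.

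It remains to pin down the exact ranks and to treat the truncation at $i = k$. For $2 \le i < k$ the two families contribute $2^i$ left-normed and $2^{i-1}$ right-normed generators, which together with $x^{2^i}$ give the asserted rank $2^i + 2^{i-1} + 1$; at $i = k$ the pattern breaks, since $x^{2^{k+1}} = 1$, the top term $\gamma_{2^k+1}(G_k) = \langle [w,x]\rangle$ collapses the left-normed family, and $\gamma_{2^k+2}(G_k) = 1$, leaving four generators and hence $\Phi_k(G_k)/\Phi_{k+1}(G_k) \cong C_2^{\,4}$ with $\Phi_{k+1}(G_k) = 1$. To confirm that the listed generators are independent, so that no rank is smaller than claimed, I would check that the factor ranks sum to the logarithmic order $\log_2|G_k| = 2^k + 2^{k-1} + k + 2$ of Lemma~\ref{lem:order-Gk}: indeed $2 + 4 + \sum_{i=2}^{k-1}(2^i + 2^{i-1}+1) + 4 = 2^k + 2^{k-1} + k + 2$, so this telescoping count forces each quotient to attain exactly the stated rank and the series to have length $k+1$. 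The main obstacle is the binomial bookkeeping in the inductive step: one must show via the congruence for $\binom{2^i}{j}$ that the collection of $[x^{2^i}, c_w]$ has exactly one odd-exponent term and must carefully route the $\equiv 2 \bmod 4$ terms into the right-normed family, while also accommodating the boundary adjustments at $i = 1, 2$ and $i = k$ where the generic pattern does not apply.
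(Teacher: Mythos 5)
Your overall strategy---inducting on $i$ and computing $\Phi_{i+1}(G_k)=\Phi_i(G_k)^2[\Phi_i(G_k),\Phi_i(G_k)]$ directly from the listed generators of $\Phi_i(G_k)$, then using the telescoping rank count against $\log_2\lvert G_k\rvert$ to force equality---is a legitimate and genuinely different route from the paper, which instead sandwiches $\Phi_i(G_k)$ between a candidate normal subgroup $L_i$ (containment $\Phi_i\subseteq L_i$ coming from $L_i\trianglelefteq G_k$ with elementary abelian quotients $L_i/L_{i+1}$) and an explicit list of elements shown to lie in $\Phi_i(G_k)$, namely $x^{2^i}$, the expansion of $(xy)^{2^i}$, and commutators $[c_{\nu(i-1)+1},c_{j-\nu(i-1)-1}]$ of generators of $\Phi_{i-1}(G_k)$. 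Your treatment of the right-normed family (squares $c_w^{\,2}\equiv[y,x,\overset{2w-3}{\ldots},x,y]$ for $2^i\le w\le 2^{i+1}-1$) does hit exactly the required range of weights, and the final rank count is the same device the paper uses.

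However, there is a concrete arithmetic gap in the key inductive step for the left-normed family. You claim that as $w$ runs over $[2^i,2^{i+1}-1]$ the weights $w+2^i$ of the collected commutators $[x^{2^i},c_w]$ run over $[2^{i+1},2^{i+2}-1]$; in fact they only run over $[2^{i+1},3\cdot 2^i-1]$, since $(2^{i+1}-1)+2^i=3\cdot 2^i-1\ne 2^{i+2}-1$. So your argument produces only the first $2^i$ of the $2^{i+1}$ left-normed generators of $\Phi_{i+1}(G_k)$; the generators $c_u$ for $3\cdot 2^i\le u\le 2^{i+2}-1$ are never obtained, and without them the rank count at stage $i+1$ cannot close. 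The gap is repairable: since $\Phi_{i+1}(G_k)$ is characteristic, hence normal, in $G_k$, once $c_{2^{i+1}}\in\Phi_{i+1}(G_k)$ all longer left-normed commutators $c_u=[c_{2^{i+1}},x,\overset{u-2^{i+1}}{\ldots},x]$ follow for free (this normality trick is precisely how the paper avoids producing more than the single generator $c_{\nu(i)+1}$ explicitly, by building $\gamma_{\nu(i)+2}(G_k)$ into its candidate $L_i$); alternatively one can iterate, using $[c_w,x^{2^i},x^{2^i}]\in[[\Phi_i,\Phi_i],\Phi_i]\subseteq\Phi_{i+1}(G_k)$ to reach weights up to $2^{i+2}-1$. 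A secondary, lesser issue is that your reverse inclusion $\Phi_{i+1}(G_k)\subseteq L_{i+1}$ rests on the phrase ``reading the same collection identities in both directions'': to make this rigorous you still need to control the correction terms lying in higher $\gamma_j(G_k)$ (via Corollary~\ref{cor:gamma-central-part} and reverse induction on weight, as the paper does) and to verify that $[\Phi_i,\Phi_i]$, not merely the commutators of the listed generators, lands in the candidate subgroup.
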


\begin{proof}
  As in the proof of Lemma~\ref{lem:squared-commutators}, we write $c_1=y$ and, for $i \ge 2$,
  \[
  c_i = [y,x,\overset{i-1}{\ldots},x] \qquad \text{and} \qquad z_i =
  [c_{i-1},y] = [y,x,\overset{i-2}{\ldots},x,y]
  \]
  and we have that
  \begin{equation} \label{equ:comm-ci-cj}
    [c_i,c_j] \equiv z_{i+j} \;\mathrm{mod}\;
      \gamma_{i+j+1}(G_k) \qquad \text{for $i > j \ge 1$.}  
  \end{equation}

  We use the generators specified in the statement of the proposition
  to define an ascending chain
  $1 = L_{k+1} \le \ldots \le L_1 \le L_0 = G_k$ so that
  each $L_i$ is the desired candidate for $\Phi_i(G_k)$.  For
  $1 \le i \le k$ we deduce from
  Proposition~\ref{pro:lower-central-Gk} and
  Corollary~\ref{cor:gamma-central-part} that
  \[
  L_i = \langle x^{2^i} \rangle M_i \quad \text{with} \quad M_i =
  \langle c_{\nu(i)+1} \rangle \gamma_{\nu(i)+2}(G_k) C_i
  \trianglelefteq G_k,
  \]
  where
  $C_i = \langle y^{2^i} \rangle \times \langle z_j \mid 2\nu(i-1)+1
  \le j \le 2^k \text{ and } j \equiv_2 1 \rangle$
  is central in~$G_k$.  (Note that the factor
  $\langle y^{2^i} \rangle$ vanishes if $i \ge 2$.)  Applying
  Proposition~\ref{pro:standard-commutator-id} and Lemma~\ref{lem:squared-commutators}, we see that
  $[x^{2^i},G_k] = [x^{2^i},H_k] \subseteq \gamma_{2^i+1}(G_k)$, hence
  $L_i \trianglelefteq G_k$ for $1 \le i \le k$.  Using
  also~\eqref{equ:comm-ci-cj}, we see that the factor groups
  $L_i/L_{i+1}$ are elementary abelian for $0 \le i \le k$.  In
  particular, this shows that $\Phi_i(G_k) \subseteq L_i$ for
  $1 \le i \le k+1$.

  Clearly, for each $i \in \{0,\ldots,k+1\}$, the value of
  $\log_2 \lvert L_i / L_{i+1} \rvert = d(L_i/L_{i+1})$ is bounded by
  the number of explicit generators used to define $L_i$ modulo
  $L_{i+1}$; these numbers are specified in the statement of the
  proposition and a routine summation shows that they add up to the
  logarithmic order $\log_2 \lvert G_k \rvert$, as given in
  Lemma~\ref{lem:order-Gk}.  Therefore each $L_i/L_{i+1}$ has the
  expected rank and it suffices to show that
  $\Phi_i(G_k) \supseteq L_i$ for $1 \le i \le k$.

  Let $i \in \{1,\ldots,k\}$.  It is enough to show that the
  following elements which generate $L_i$ as a normal subgroup belong
  to~$\Phi_i(G_k)$:
  \[
  x^{2^i}, \quad c_{\nu(i)+1}, \qquad \text{and} \quad z_j \quad
  \text{for} \quad \text{$2\nu(i-1)+1 \le j \le 2^k$ with
    $j \equiv_2 1$.}
  \]
  Clearly, $x^{2^i} \in \Phi_i(G_k)$. Also $(xy)^{2^i}, [y,x,x^2,\ldots,x^{2^{i-1}}] \in \Phi_i(G_k)$. Hence, applying the rule $(ab)^2=a^2b^2[b,a]$ repeatedly, we see that
  \[
  (xy)^{2^i}=x^{2^i}[y,x,x^2,\ldots,x^{2^{i-2}}]^2[y,x,x^2,\ldots,x^{2^{i-1}}]  \qquad \text{ for } i\ge 2,
  \]
   and by Proposition~\ref{pro:standard-commutator-id} and 
  Lemma~\ref{lem:squared-commutators}
  \[
  [y,x,x^2,\ldots,x^{2^{i-2}}]^2 \equiv z_{2^i-1}\quad \text{mod } \gamma_{2^i}(G_k).
  \]
   Now let $2\nu(i-1)+1 < j \le 2^k$ with $j \equiv_2 1$.  By
  Corollary~\ref{cor:gamma-central-part} and reverse induction on
  $j$ it suffices to show that $z_j$ is contained in $\Phi_i(G_k)$
  modulo $\gamma_{j+1}(G_k)$.  But this follows from
  \eqref{equ:comm-ci-cj} and the fact that $c_{\nu(i-1)+1},
  c_{j-\nu(i-1)-1} \in \Phi_{i-1}(G_k)$ by induction on~$i$.
  
   Lastly, similarly using Lemma~\ref{lem:squared-commutators}, 
  \[
  c_{\nu(i)+1} = [y,x,\overset{\nu(i)}{\ldots},x] \equiv
  [y,x,x^2,\ldots,x^{2^{i-1}}] \quad \text{mod } C_i.
  \]
  
  For $i=k$, recall that $[y,x,\overset{2^k}\ldots,x]=[y,x,\overset{2^k-1}\ldots,x,y]$. Thus we are done.
  
  We end by noting that  $\Phi_i(G_k)=\langle x^{2^i},[y,x, \overset{2^i-3}{\ldots}, x,y]\rangle \gamma_{2^i}(G_k)$ for $2\le i\le k$.
  \end{proof}

Using \cite[Cor.~4.2]{KT}, we can now complete
the proof of Theorem~\ref{thm:main-thm}: it suffices to compute
$\hdim^{\mathcal{S}}_G(Z)$ and $\hdim^{\mathcal{S}}_G(H)$ for the
standard filtration series
$\mathcal{S} \in \{ \mathcal{L}, \mathcal{D}, \mathcal{F} \}$.

Corollary~\ref{cor:lower-p-central-Gk} implies
\begin{align}
  \hdim^{\mathcal{L}}_G(Z) & = \varliminf_{i \to \infty}
                             \frac{\log_2 \lvert Z P_i(G) : P_i(G) \rvert}{\log_2 \lvert G : P_i(G)
                             \rvert} = \lim_{i \to \infty}
                             \frac{\nicefrac{i}{2}}{\nicefrac{5i}{2}} = \nicefrac{1}{5}, 
                             \label{equ:xi-for-L} \\
  \hdim^{\mathcal{L}}_G(H) & = \varliminf_{i \to \infty}
                             \frac{\log_2 \lvert H P_i(G) : P_i(G) \rvert}{\log_2 \lvert G : P_i(G)
                             \rvert} = \lim_{i \to \infty}
                             \frac{\nicefrac{3i}{2}}{\nicefrac{5i}{2}} = \nicefrac{3}{5}.\notag
\end{align}

Corollary~\ref{cor:dim-subgroup} implies
\begin{align}
  \hdim^{\mathcal{D}}_G(Z) & = \varliminf_{i \to \infty}
                             \frac{\log_2 \lvert Z D_i(G) : D_i(G) \rvert}{\log_2 \lvert G : D_i(G)
                             \rvert} = \lim_{i \to \infty}
                             \frac{\nicefrac{i}{2}}{\nicefrac{3i}{2}} = \nicefrac{1}{3},\label{equ:xi-for-D} \\
  \hdim^{\mathcal{D}}_G(H) & = \varliminf_{i \to \infty}
                             \frac{\log_2 \lvert H D_i(G) : D_i(G) \rvert}{\log_2 \lvert G : D_i(G)
                             \rvert} = \lim_{i \to \infty}
                             \frac{\nicefrac{3i}{2}}{\nicefrac{3i}{2}} = 1.\notag
\end{align}

Lastly, Proposition~\ref{pro:Frattini-Gk} implies
\begin{align}
  \hdim^{\mathcal{F}}_G(Z) & = \varliminf_{i \to \infty}
                             \frac{\log_2 \lvert Z \Phi_i(G) :
                             \Phi_i(G) \rvert}{\log_2 \lvert G :
                             \Phi_i(G) \rvert} = \lim_{i \to \infty}
                             \frac{2^{i-1}}{3\cdot 2^{i-1}} = \nicefrac{1}{3},\label{equ:xi-for-F} \\
  \hdim^{\mathcal{F}}_G(H) & = \varliminf_{i \to \infty}
                             \frac{\log_2 \lvert H \Phi_i(G) :
                             \Phi_i(G) \rvert}{\log_2 \lvert G :
                             \Phi_i(G) \rvert} = \lim_{i \to \infty}
                             \frac{3\cdot 2^{i-1}}{3\cdot 2^{i-1}} = 1.\notag
\end{align}


\section{The entire Hausdorff spectrum of $G$ with respect to the
  standard filtration series} \label{sec:entire-spectrum}

We continue to use the notation set up in
Section~\ref{sec:presentation} to study and determine the entire
Hausdorff spectrum of the pro-$2$ group~$G$, with respect to the
standard filtration series
$\mathcal{P}, \mathcal{D}, \mathcal{F},
\mathcal{L}$.

\begin{proof}[Proof of Theorem~\ref{thm:entire-spectrum}]
  As in Section~\ref{sec:presentation}, we write
  $W = G/Z \cong C_2 \mathrel{\hat{\wr}} \mathbb{Z}_2$, and we denote
  by $\pi \colon G \rightarrow W$ the canonical projection with
  $\ker \pi = Z$.

  \medskip

  If $\mathcal{S}$ is one of the filtration series
  $\mathcal{P}, \mathcal{D}, \mathcal{F}$ on~$G$, then the result follows exactly as was done in \cite{KT}.

  \medskip

  It remains to pin down the Hausdorff spectrum of $G$ with respect to
  the lower $2$-series $\mathcal{L} \colon P_i(G)$,
  $i \in \mathbb{N}$, on~$G$.  Now the
  normal subgroups $Z, H \trianglelefteq_\mathrm{c} G$ have strong
  Hausdorff dimensions $\hdim^{\mathcal{L}}_G(Z) = \nicefrac{1}{5}$
  and $\hdim^{\mathcal{L}}_G(H) = \nicefrac{3}{5}$.  As in \cite{KT},
  we deduce that $\hspec^{\mathcal{L}}(G)$ contains
  \[
  S = [0,\nicefrac{3}{5}] \cup
  \{\nicefrac{3}{5} + \nicefrac{m}{5\cdot2^{n-1}}\mid m, n \in\mathbb{N}_0
  \text{ with } 2^{n-1} < m\le 2^n\}.
  \]
  Thus it suffices to show that
  \begin{equation} \label{equ:3/5-4/5} 
    (\nicefrac{3}{5},\nicefrac{4}{5}) \subseteq \hspec^{\mathcal{L}}(G)
    \subseteq  (\nicefrac{3}{5},\nicefrac{4}{5}) \cup S.
   \end{equation}
   
   The second inclusion follows as in \cite{KT}.
 To prove the first inclusion in~\eqref{equ:3/5-4/5}, we mimic the argument in \cite{KT}. until the key step of showing that
  \begin{equation}\label{equ:the-key}
    \varliminf_{i \to \infty} \frac{\log_2 \lvert K P_i(G) \cap Z : P_i(G) \cap Z
      \rvert}{\log_2 \lvert Z : P_i(G) \cap Z \rvert} =
    \hdim_Z^{\mathcal{L} \vert_Z}(K \cap Z) = (2m-1)/2^{n}.
  \end{equation}
   First we examine the lower limit on the left-hand side, restricting
  to indices of the form $i = 2^k +2$, $k \in \mathbb{N}$.  Let
  $i = 2^k+2$, where $k \ge n$.  Recall that
    $G_k = G / \langle x^{2^{k+1}}, [x^{2^k},y] \rangle^G$ 
  and consider the canonical projection $\varrho_k \colon G \to G_k$,
  $g \mapsto \overline{g}$.  As before, we write $H_k = H \varrho_k$. 
    Furthermore, we observe that
    $Z_k = \langle \overline{x}^{2^k} \rangle Z \varrho_k$ with
    $\lvert Z_k : Z \varrho_k \rvert = 2$. 
  By Corollary~\ref{cor:lower-p-central-Gk}, we have
  \[
  \lvert H_k : H_k \cap \underbrace{P_i(G_k)}_{=1} \rvert = \lvert H_k
  \rvert = \lvert   H : H \cap P_i(G) \rvert
  \]
  and hence
  \[
  \frac{\log_2 \lvert K P_i(G) \cap Z : P_i(G) \cap Z \rvert}{\log_2
    \lvert Z : P_i(G) \cap Z \rvert} = \frac{\log_2 \lvert K \varrho_k
    \cap  Z \varrho_k \rvert}{\log_2 \lvert
     Z \varrho_k \rvert}.
  \]
  Observe that
  \[
  K \varrho_k \cap H_k = \langle \overline{y_j} \mid 0 \le j < 2^k \text{ with } j
  \equiv_{2^n} 0,1, \ldots, m-1 \rangle.
  \]
  From Lemma~\ref{lem:order-Gk} we 
    see that $Z \varrho_k \cong C_2^{\, 2^{k-1}+1}$ 
  and further we deduce that
  \begin{align*}
    K & \varrho_k \cap Z \varrho_k \\
      & =  \langle \{ \overline{y}^2 \} 
        \cup \{ [ \overline{y_0}, \overline{ y_j}] \mid 0
        \le j \le 2^{k-1}, \; j \equiv_{2^n}   0, \pm 1,\ldots, \pm (m-1)   \}  
        \rangle \\
      & \cong  C_2^{\, (2m-1) 2^{k-n-1} + 1}.
  \end{align*}
  This yields
  \begin{align*}
    \varliminf_{i \to \infty} \frac{\log_2 \lvert K P_i(G) \cap Z :
    P_i(G) \cap Z \rvert}{\log_2 \lvert Z : P_i(G) \cap Z \rvert}
    & \le \varliminf_{k \to \infty} \frac{\log_2 \lvert K \varrho_k \cap
     Z \varrho_k \rvert}{\log_2 \lvert
       Z \varrho_k \rvert}\\
    & = \lim_{k \to \infty}  \frac{(2m-1) 2^{k-n-1} + 1}{2^{k-1}+1} = (2m-1)/2^n.
  \end{align*}
  In order to establish~\eqref{equ:the-key} it now suffices to prove
  that
  \begin{equation} \label{equ:last-reduction}
  \varliminf_{i \to \infty} \frac{\log_2 \lvert (K \cap Z) (P_i(G)
    \cap Z) : P_i(G) \cap Z \rvert}{\log_2 \lvert Z : P_i(G) \cap Z
    \rvert} \ge (2m-1)/2^n.
  \end{equation}
  Our analysis above yields
  \[
  K \cap Z = \langle \{ y^2\} \cup \{ [y_0, y_j] \mid j \in \mathbb{N}
  \text{ with } j \equiv_{2^n} 
  0, \pm 1,\ldots,\pm (m-1) \} \rangle.
  \]
  Setting
  \[
  L= \langle y_j \mid j \in \mathbb{N}_0 \text{ with } j \equiv_{2^n} 
  0, \pm 1, \ldots, \pm (m-1) \rangle Z,
  \]
  and recalling the notation $c_1 = y = y_0$, we conclude that
  \[
  K \cap Z \supseteq \{ [c_1,g] \mid g \in L \}.
  \] 

  Next we consider the set
  \[
  D = \{ j \in \mathbb{N}_0 \mid \exists g \in L :  g \equiv_{P_{j+1}(G) Z} c_j \}.
  \]
  Each element $y_j$ can be written (modulo $Z$) as a product
  \[
  y_j \equiv_Z \prod_{k=0}^j c_{k+1}^{\, \beta(j,k)} \qquad
  \text{where $\beta(j,k) = \tbinom{j}{k}$,}
  \]
  using the elements $c_i = [y,x,\overset{i-1}{\ldots},x]$ introduced
  in the proof of Proposition~\ref{pro:Frattini-Gk}.  In this product
  decomposition, the exponents should be read modulo~$2$, and the
  elementary identity $(1+t)^{j + 2^n} = (1+t)^j (1+t^{2^n})$ in
  $\mathbb{F}_2[\![t]\!]$ translates to
  \[
  y_j^{\, -1} y_{j + 2^n} = y^{-x^j} y^{x^{j + 2^n}} \equiv_Z \prod_{k=0}^j c_{k+1+2^n}^{\,
    \beta(j,k)}  \qquad \text{for all $j \in \mathbb{N}$.}
  \]
  Inductively, we obtain
  \[
  D = D_0 + 2^n \mathbb{N}_0 \qquad \text{for $D_0 = D \cap
    \{1,\ldots,2^n\}$}.
  \]
  One checks that $D_0 =\{1,2,\ldots, 2m-1\}$. Hence for each
  $k \in \mathbb{N}_0$, the set
  $((2k) 2^n + D_0) \cup ((2k+1)2^n + D_0)$ consists of 
  $2m-2$ even numbers.

  For each $j \in D$ with $j \equiv_2 0$ there exists $g_j \in L$ with
  $g_j \equiv_{P_{j+1}(G) Z} c_j$ and we deduce from~\eqref{equ:comm-ci-cj} that
  \[
  z_{1+j} \equiv_{P_{2+j}(G)} [c_1,c_j] \equiv_{P_{2+j}(G)}
  [c_1,g_j] \in K \cap Z.
  \]
  For $i = 2^{n+1}q + r \in \mathbb{N}$, where $q \in \mathbb{N}$, $r \in \mathbb{N}_0$
  with $0 \le r < 2^{n+1}$, the count
  \[
  \lvert \{ j \in D \mid j \equiv_2 0 \text{ and } j < i-1 \} \rvert
  \geq q (2m-2) -1,
  \]
 yields
  \[
  \log_2 \lvert (K \cap Z) (P_i(G) \cap Z) : P_i(G) \cap Z \rvert
  \ge q (2m-2) -1.
  \]
 However,  we deduce from the proof of Proposition~\ref{pro:lower-central-Gk} that for $j\equiv_{2^n} 2m-3 $, the element $z_{1+j}\equiv z_{4+j}$ modulo $P_{5+j}(G)$. Hence we in fact have
  \[
  \log_2 \lvert (K \cap Z) (P_i(G) \cap Z) : P_i(G) \cap Z \rvert
  \ge q (2m-1) -1.
  \]

  From Corollary~\ref{cor:lower-p-central-Gk} we observe that, for
  $i \ge 3$,
  \[
  \log_2 \lvert Z : P_i(G) \cap Z \rvert = \lfloor \nicefrac{i}{2} 
  \rfloor \le
  q\cdot 2^n + 2^n. 
  \]
  These estimates show that \eqref{equ:last-reduction} holds.
  \end{proof}



\begin{thebibliography}{99}

\bibitem{DDMS99} J.\! D.~Dixon, M.\! P.\! F.~du~Sautoy, A.~Mann, and
  D.~Segal, \textit{Analytic pro-p groups}, Second edition, Cambridge
  University Press, Cambridge, 1999.

\bibitem{Kl99} B.~Klopsch, \textit{Substitution Groups, Subgroup Growth and
  Other Topics}, D.Phil. Thesis, University of Oxford, 1999.

\bibitem{KT} B.~Klopsch and A.~Thillaisundaram, Normal Hausdorff spectra of pro-$p$ groups, 2018, arXiv preprint.

\bibitem{KlThZuXX} B.~Klopsch, A.~Thillaisundaram, and
  A.~Zugadi-Reizabal, Hausdorff dimensions in $p$-adic analytic
  groups, 2017, preprint: \texttt{arXiv:1702.06789}.

\bibitem{LeMc02} C.\! R. Leedham-Green and S. McKay, \textit{The
    structure of groups of prime power order}, Oxford University
  Press, Oxford, 2002.
  
  
\bibitem{Sh00} A. Shalev, Lie methods in the theory of pro-$p$ groups,
  in: \textit{New horizons in pro-$p$ groups}, Birkh\"auser, Boston,
  2000.


\end{thebibliography}
\end{document}